\theoremstyle{plain}
\newtheorem{theorem}{Theorem}[section]
\newtheorem{lemma}[theorem]{Lemma}
\newtheorem{prop}[theorem]{Proposition}
\theoremstyle{definition}
\newtheorem{conjecture}[theorem]{Conjecture}
\newcommand{\GL}{\mbox{\rm GL}}
\newcommand{\SL}{\mbox{\rm SL}}
\newcommand{\M}{\mbox{\rm M}}
\newcommand{\Char}{\mbox{\rm char}}
\newcommand{\N}{\mathbb N}
\newcommand{\Z}{\mathbb{Z}}
\newcommand{\Q}{\mathbb{Q}}
\begin{document}
	\date{}
	\title[Generalized power central group identities]{Generalized power central group identities in almost subnormal subgroups of $\GL_n(D)$}
	
	\author[Bui Xuan Hai]{Bui Xuan Hai}
	\address{Faculty of Mathematics and Computer Science, VNUHCM-University of Science,	227 Nguyen Van Cu Str., Dist. 5, HCM-City, Vietnam}
	\email{bxhai@hcmus.edu.vn; huynhvietkhanh@gmail.com; mhbien@hcmus.edu.vn}
	
	\author[Huynh Viet Khanh]{Huynh Viet Khanh}
		
	\author[Mai Hoang Bien]{Mai Hoang Bien}\thanks{The first and third authors are funded by Vietnam National Foundation for Science and Technology Development (NAFOSTED) under Grant No. 101.04-2016.18.}
			
	\keywords{Division ring; skew linear group; almost subnormal subgroup; free subgroup; generalized power central group identity. \\
	\protect \indent 2010 {\it Mathematics Subject Classification.} 20G50, 16R50, 16K40.}
	
	\maketitle
	\begin{abstract} In this paper, we study almost subnormal subgroups of the general linear group $\GL_n(D)$ of degree $n\ge 1$ over a division ring $D$ that satisfy a generalized power central group identity.
	\end{abstract} 

\section{Introduction and preliminaries}\label{ss8.1}
For a ring $R$ with identity, denote by $R^*$ the unit group of $R$. Let $X=\{x_1,x_2,\dots,x_m\}$ be a set of $m$ non-commuting indeterminates. The notation $X^{-1}$ is understood to be the set $\{x_1^{-1}, x_2^{-1},\dots, x_m^{-1}\}$, where $x_i\mapsto x_i^{-1}$ defines a bijection between the sets $X$ and $X^{-1}$. For a field $F$, let $F[X,X^{-1}]$ be the group $F$-algebra of the free group generated by $X$, that is, $F[X,X^{-1}]$ is the ring of all elements having the form of finite sums of $ax_{i_1}^{n_1}x_{i_2}^{n_2}\cdots x_{i_t}^{n_t}$, where $n_i\in \N, a\in F, x_{i_j}\in X\cup X^{-1}$. Some authors call $F[X,X^{-1}]$ the {\it Laurent polynomial ring} on $X$ over $F$. Let $A$ be a ring whose center is $F$. Denote by $F\langle X\rangle$ and $A\langle X\rangle = A*_F F\langle X\rangle$ respectively the free  $F$-algebra on $X$ and the free product of $A$ and $F\langle X\rangle$ over $F$. If $A=D$ is a division ring, then $D\langle X\rangle$ is an fir (free ideal ring), so  $D\langle X\rangle$ has the (universal) division ring of fractions denoted by $D(X)$ (see \cite[Chapter 7]{cohn}). The free product $A*_FF[X,X^{-1}]$ of $A$ and $F[X,X^{-1}]$ over $F$ is called the \textit{generalized Laurent polynomial ring} of $A$ on $X$ over $F$ and is denoted by $A_F[X,X^{-1}]$. Let $f\in A_F[X,X^{-1}]$, that is, $f(x_1,x_2,\dots,x_m)=\sum\limits_{\lambda=1}^{\ell}f_\lambda$  with $$f_\lambda(x_1,x_2,\dots,x_m)=a_{\lambda, 1}x_{i_{\lambda,1}}^{n_{\lambda,1}}a_{\lambda, 2}x_{i_{\lambda,2}}^{n_{\lambda,2}}a_{\lambda, 3}\cdots a_{\lambda, t_\lambda}x_{i_{\lambda,t}}^{n_{\lambda,t}}a_{\lambda,t_\lambda+1},$$ 
where $a_{i,j}\in A, n_{i,j}\in \Z$. If all $n_{i,j}$ are non-negative, then $f$ is called a {\it generalized polynomial} of $A$ over $F$. Assume that $w$ is an element in $A_F[X,X^{-1}]\backslash A$ of the  form $$w(x_1,x_2, \dots, x_m)=a_1x_{i_1}^{n_1}a_2x_{i_2}^{n_2}\cdots a_tx_{i_t}^{n_t}a_{t+1},$$ where $a_i\in A$, $n_i\in \Z$ and $x_{i_j}\in X$. If $a_i\in A^*$ for all $i$, then we call $w$ {\it a generalized group monomial} over $A^*$. 

From now on in this paper, we consider only the particular case when  $A=\M_n(D)$, so $A^*=\GL_n(D)$. In this case, we remark that the following lemma shows that our definition  satisfies the requirements in the definition of generalized group monomials given in \cite{golubchik} and \cite{Pa_To_85} in most cases. In particular, it is the case for division rings with infinite centers we consider in this paper.  So, we would emphasize that in the proofs of our results, it is correct to use the results from \cite{golubchik} and \cite{Pa_To_85}.  Recall that for a group $G$, a subgroup $H$ of $G$ is said to be {\it almost subnormal} in $G$ if there is a family of subgroups 
$$H=H_r\le H_{r-1}\le\cdots\le H_1=G$$
of $G$ such that for each $1<i\le r$, $H_{i}$ is normal in $H_{i-1}$ or $H_i$ has finite index in $H_{i-1}$. We call such a series of subgroups an {\it almost normal series} in $G$.

\begin{lemma}\label{lem:1.1} Let $D$ be a infinite  division ring with center $F$ and $H$ a non-central almost subnormal subgroup of $\GL_n(D)$. 
	
	(i) If $n\geq 2$, then $Z(H)\subseteq F$. 
	
	(ii) If $n=1$, then $Z(H)\subseteq F$ provided $F$ is infinite.
	
\end{lemma}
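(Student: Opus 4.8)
The plan is to deduce both parts from one statement: the centralizer of $H$ in $\GL_n(D)$ is as small as it can be. Identify $F$ with the central subgroup $F^*I_n=Z(\GL_n(D))$. Since $Z(H)=H\cap C_{\GL_n(D)}(H)$, it suffices to show $C_{\GL_n(D)}(H)\subseteq F^*I_n$, and for this it is enough to prove that the $F$-subalgebra $F[H]\subseteq\M_n(D)$ generated by $H$ is all of $\M_n(D)$ when $n\ge2$, respectively that the division subring $F(H)\subseteq D$ generated by $H$ is all of $D$ when $n=1$. Indeed, an element of $\M_n(D)$ commuting with $H$ commutes with $F[H]$, so $C_{\M_n(D)}(H)=C_{\M_n(D)}(F[H])$ collapses to $Z(\M_n(D))=F\cdot I_n$ once $F[H]=\M_n(D)$; and similarly $C_D(H)=C_D(F(H))=F$ once $F(H)=D$. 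Thus the real content is that a non-central almost subnormal subgroup is ``large'' in this algebraic sense, and the hypotheses on $D$ (resp. $F$) will be used precisely to carry largeness through the finite-index links of an almost normal series.

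\textbf{Case $n\ge 2$.} Fix an almost normal series $H=H_r\le\cdots\le H_1=\GL_n(D)$. Non-centrality propagates upward: $H=H_r$ is non-central and $H_r\subseteq H_i$, so every $H_i$ is non-central. I claim by downward induction along the series that $H\supseteq\SL_n(D)$; the start $H_1\supseteq\SL_n(D)$ is clear. So it suffices to prove: if $M\supseteq\SL_n(D)$ and $N\le M$ is non-central with $N\triangleleft M$ or $[M:N]<\infty$, then $N\supseteq\SL_n(D)$. Here I use the classical facts, valid since $D$ is infinite and $n\ge2$, that $\SL_n(D)$ is perfect, $\mathrm{PSL}_n(D)$ is infinite simple, $C_{\GL_n(D)}(\SL_n(D))=F^*I_n$, and consequently $\SL_n(D)$ has no proper subgroup of finite index and its only non-central normal subgroup is itself. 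If $[M:N]<\infty$, then $\SL_n(D)\cap N$ has finite index in $\SL_n(D)$, hence equals it, so $\SL_n(D)\subseteq N$. If $N\triangleleft M$, then $\SL_n(D)\triangleleft M$ too (as $\SL_n(D)\triangleleft\GL_n(D)$ and $\SL_n(D)\subseteq M$), so $[N,\SL_n(D)]\subseteq N\cap\SL_n(D)$ is a normal subgroup of $\SL_n(D)$; if it were contained in $Z(\GL_n(D))$, then for each $a\in N$ the map $b\mapsto[a,b]$ would be a homomorphism $\SL_n(D)\to Z(\GL_n(D))$, hence trivial by perfectness, forcing $N\subseteq C_{\GL_n(D)}(\SL_n(D))=F^*I_n$ against non-centrality; so $[N,\SL_n(D)]$ is non-central in $\SL_n(D)$, hence equals $\SL_n(D)$, giving $\SL_n(D)\subseteq N$. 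This yields $H\supseteq\SL_n(D)$, and since $F[\SL_n(D)]=\M_n(D)$ (transvections supply all $E_{ij}$, and diagonal matrices in $\SL_n(D)$ supply all $dE_{ij}$, $d\in D$), we get $C_{\M_n(D)}(H)=F\cdot I_n$, i.e.\ $Z(H)\subseteq F$.

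\textbf{Case $n=1$.} Now $F$ is infinite. Suppose for contradiction that some $z\in Z(H)\setminus F$. Then every element of $H$ commutes with $z$, so $H\subseteq C_{D^*}(z)=K^*$, where $K=C_D(z)$ is a division subring of $D$ that is \emph{proper}, since $z\notin Z(D)=F$. Thus $H$ would be a non-central almost subnormal subgroup of $D^*$ lying inside a proper division subring, i.e.\ $F(H)\ne D$. This contradicts the extension to almost subnormal subgroups of the Cartan--Brauer--Hua and Stuth theorems, which says that for a division ring with infinite center $F$ a non-central almost subnormal subgroup $H$ of $D^*$ satisfies $C_D(H)=F$ (equivalently $F(H)=D$). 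Hence $Z(H)\subseteq F$.

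\textbf{Where the difficulty is.} The routine parts are the reduction of the first paragraph and the computation $F[\SL_n(D)]=\M_n(D)$. The crux is transporting largeness across the finite-index links of the series, and this is exactly where infiniteness is indispensable: if $D=\F_q$ then every subgroup of the finite group $\GL_n(D)$ is almost subnormal (the inclusion $H\le\GL_n(D)$ is itself a finite-index step), and a Singer cycle $H\cong\F_{q^n}^*$ is abelian and non-central, so $Z(H)=H\not\subseteq F^*I_n$ and the lemma fails. For $n=1$ there is no $\SL_n$ to fall back on, which is why that case needs the center $F$ itself to be infinite; the mechanism in the finite-index steps is a pigeonhole over $F$: if the unit group of a proper division subring $K\subsetneq D$ had finite index in $D^*$, then for $x\notin K$ the infinitely many units $1+tx$ $(t\in F)$ would repeat a coset of $K^*$, and $(1+tx)=k(1+t'x)$ with $k\in K^*$, $t\ne t'$ would force $x\in K$, a contradiction. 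Establishing this robustly, together with the subnormal case, is the heart of the $n=1$ statement.
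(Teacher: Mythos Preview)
Your argument is correct and follows essentially the same route as the paper: for $n\ge 2$ you show $\SL_n(D)\subseteq H$ (which the paper obtains by citing \cite[Theorem~3.3]{Pa_NgBiHa_2016}) and conclude $F[H]=\M_n(D)$, while for $n=1$ you invoke the almost-subnormal extension of the Cartan--Brauer--Hua/Stuth theorem (the paper cites \cite[Theorem~3.10]{Pa_dbh_19}) to get $F(H)=D$. The only difference is that you sketch the inductive proof of $\SL_n(D)\subseteq H$ along the almost normal series rather than quoting it, and your parenthetical ``equivalently $F(H)=D$'' is phrased slightly backward---the cited theorem gives $F(H)=D$ directly, from which $C_D(H)=F$ follows, not the reverse---but this does not affect the validity of your contradiction.
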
   
\begin{proof}
	(i) By \cite[Theorem 3.3]{Pa_NgBiHa_2016},  $\SL_n(D)\subseteq H$. Consequently, the subring $F[H]$ is normalized by $\GL_n(D)$. In view of the Cartan-Brauer-Hua theorem for matrix rings, it follows that $F[H]=\M_n(D)$, and hence $Z(H)\subseteq F$. 
	
	(ii) Let us  consider the division subring $F(H)$ of $D$ generated by $H$ over $F$. Since $F(H)$ is $H$-invariant, that is, $xF(H)x^{-1}=F(H)$ for every element $x\in H$, by \cite[Theorem 3.10]{Pa_dbh_19}, either  $F(H)=D$ or $F(H)\subseteq F$. Because $H$ is non-central, we have  $F(H)=D$, hence it follows that $Z(H)\subseteq F$.  
\end{proof}
Assume that $H$ is a subgroup of $\GL_n(D)$ and $w$ is a generalized group monomial over $\GL_n(D)$. We say that $w=1$ is a \textit{generalized group identity} (briefly, GGI) of $H$ if $w(c_1,c_2,\dots,c_m)=1$ for any $(c_1,c_2,\dots,c_m)\in H^m$. If for every $(c_1,c_2,\dots,c_m)\in H^m$, there is a positive integer $p$ depending on $(c_1,c_2,\dots,c_m)$ such that $w(c_1,c_2,\dots,c_m)^p\in F$, then $w$ is called a \textit{generalized power central group identity} (briefly, GPCGI) of $H$ (see   \cite{chiba_88}). In this case, we also say that $H$ \textit{satisfies the} GPCGI $w$. A GPCGI $u$ of $H$ is  \textit{non-trivial} if it satisfies the condition $u^p\in \M_n(D)_F[X,X^{-1}]\backslash \M_n(D)$ for any positive integer $p$.
A well-known example of a generalized group monomial is  $w(x)=axa^{-1}x^{-1}\in D_F[x, x^{-1}]$, where $a\in D\backslash F$ \cite{her}. Herstein \cite[Conjecture 2]{her} conjectured that $w$ is not a GPCGI of $D^*$. In this paper, we will consider GPCGIs of almost subnormal subgroups of $\GL_n(D)$. Almost subnormal subgroups of skew linear groups were first studied by B. Hartley in \cite{Pa_Ha_89} and recently have received attentions (see also \cite{Pa_dbh_19,  dung_19, hai-khanh_19, Pa_NgBiHa_2016}). In this paper, we mainly consider the following conjecture.
   
\begin{conjecture}\label{c1.2} \textit{Let $D$ be a division ring with center $F$, $\GL_n(D)$  the general linear group of degree $n$ over $D$ such that if $n\ge 2$ then $D$ is assumed not to be a locally finite field,  and  $$w(x_1,x_2, \dots, x_m)=a_1x_{i_1}^{\alpha_1}a_2x_{i_2}^{\alpha_2}\cdots a_tx_{i_t}^{\alpha_t}a_{t+1}$$  a generalized group monomial over $\GL_n(D)$. If $N$ is an almost subnormal subgroup of $\GL_n(D)$ and $w$ is a non-trivial {\rm GPCGI} of $N$, then $N$ is central.} 
\end{conjecture}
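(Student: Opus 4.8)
\emph{Outline of a proof.} Suppose, for contradiction, that $N$ is not central; by Lemma~\ref{lem:1.1} we then have $Z(N)\subseteq F$. If $n=1$ and $D$ is commutative, then $\GL_n(D)=D^*$ is abelian, so no non-central subgroup exists and there is nothing to prove; thus when $n=1$ we may assume $D$ is non-commutative. The argument then splits according to whether $D$ is finite-dimensional over its center.

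\emph{Reduction to a division ring.} If $n\ge 2$, then $D$ is not a locally finite field and, $N$ being non-central almost subnormal, \cite[Theorem~3.3]{Pa_NgBiHa_2016} gives $\SL_n(D)\subseteq N$; hence $w$ is a non-trivial GPCGI of $\SL_n(D)$. Inside $\SL_n(D)$ lie the root subgroups $U_{ij}=\{I+ae_{ij}:a\in D\}\cong(D,+)$ and the tori $T_{ij}=\{I+(d-1)e_{ii}+(d^{-1}-1)e_{jj}:d\in D^*\}$. The plan is to substitute elements of these subgroups for the indeterminates of $w$, and to conjugate $w$ by a fixed matrix so as to bring the coefficients $a_\ell$ into a convenient triangular position; combined with the structural analysis of generalized group identities in $\GL_n$ from \cite{golubchik,Pa_To_85}, this should reduce the problem either to a non-trivial GPCGI of $D^*$ (the case $n=1$) or, when $D=F$ is a field, to the classical statement that $\SL_n(F)$ admits no non-trivial GPCGI for $F$ not locally finite. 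Keeping the reduced identity non-trivial --- i.e.\ preventing the fixed coefficients from collapsing it to a scalar --- is the first delicate point.

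\emph{The division-ring case.} Now let $n=1$, $D$ non-commutative, and $N$ a non-central almost subnormal subgroup of $D^*$ carrying a non-trivial GPCGI $w$. By the proof of Lemma~\ref{lem:1.1}(ii), the division subring $F(N)$ generated by $N$ over $F$ equals $D$. Using the almost normal series $N=H_r\le\cdots\le H_1=D^*$ --- at each step $H_i$ is normal in, or of finite index in, $H_{i-1}$, so that $F(H_i)=D$ propagates upward --- one transfers a generalized power-central condition from $N$ to $D^*$ itself. Then, assuming (as one may, treating finite $F$ separately) that $F$ is infinite, I would invoke the theory of generalized rational identities: by Amitsur's theorem together with its power-central refinement in the spirit of Chiba \cite{chiba_88}, a division ring with infinite center satisfying a non-trivial generalized power-central group identity must be finite-dimensional over its center. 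Hence $[D:F]<\infty$.

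\emph{The finite-dimensional endgame.} When $[D:F]<\infty$, $\M_n(D)$ is a finite-dimensional central simple $F$-algebra; since the Brauer group of a locally finite field vanishes, $F$ is not locally finite, and consequently $D^*$ --- and hence every non-central almost subnormal subgroup $N$ --- contains a non-cyclic free subgroup. I would finish in one of two ways: either (a) combine the Zariski-density of $N$ in the relevant linear algebraic group with an irreducibility/Baire-category argument over a sufficiently large extension of $F$ to extract a \emph{single} exponent $p$ valid for all tuples in $\GL_n(D)^m$, whence $v(\bar x,y):=w(\bar x)^{p}\,y\,w(\bar x)^{-p}\,y^{-1}=1$ becomes a non-trivial GGI of $N$, contradicting the absence of non-trivial GGIs drawn from \cite{golubchik,Pa_To_85,hai-khanh_19}; or (b) run a ping-pong argument on a free subgroup of $N$ in general position relative to the coefficients $a_\ell$, producing a value $w(\bar c)$ of infinite order modulo $F^*$ --- again a contradiction. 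In either route the crux, and the step I expect to be hardest, is the interaction between the fixed matrix coefficients $a_\ell$ and the uniformization of the exponent $p=p(\bar c)$; this is precisely where the machinery of generalized rational identities (Amitsur, Chiba \cite{chiba_88}) is indispensable, and where the hypothesis that $D$ is not a locally finite field enters decisively.
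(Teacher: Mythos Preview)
The statement you are attempting to prove is labelled a \emph{Conjecture} in the paper, and the paper does \emph{not} prove it in general; it establishes only special cases (weakly locally finite $D$ with infinite center in Theorem~\ref{Th2.9}; uncountable center in Proposition~\ref{c2.7} and Theorem~\ref{t3.3}). Indeed the authors point out that already the very particular instance $n=1$, $N=D^*$, $w(x,y)=xyx^{-1}y^{-1}$ is Herstein's Conjecture~2 from \cite{her}, which remains open in general. So any purported full proof must be scrutinised against that benchmark.

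Your outline has a concrete gap at the step where you ``invoke the theory of generalized rational identities: by Amitsur's theorem together with its power-central refinement in the spirit of Chiba \cite{chiba_88}'' to conclude $[D:F]<\infty$ from a non-trivial GPCGI on $D^*$ with $F$ merely infinite. Chiba's result, as used in this paper (see the proof of Proposition~\ref{c2.7}), requires the center to be \emph{uncountable}; there is no known power-central analogue of Amitsur's GRI theorem that works over an arbitrary infinite center, and producing one would in particular settle Herstein's conjecture. The same obstruction recurs in your ``finite-dimensional endgame'': the uniformisation of the exponent $p=p(\bar c)$ via Zariski-density/Baire-category is exactly what fails without extra hypotheses --- the paper achieves a uniform exponent only by first passing to a centrally finite subring $D_1$ and appealing to \cite[Lemma~3.1]{her2} (proof of Theorem~\ref{Th2.9}), or by exploiting uncountability of $F$ (proof of Theorem~\ref{t3.3}). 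Your reduction from $n\ge 2$ to $n=1$ via root subgroups is also only sketched and, as you yourself note, keeping the resulting identity non-trivial is delicate; the paper circumvents this entirely by transferring the GPCGI from $N$ to $\GL_n(D)$ via the $c_r(a,x)$ construction (Lemma~\ref{l7.1} and Lemma~\ref{lem:2.3}) rather than lowering $n$. In short, the approach is not wrong in spirit, but the two load-bearing appeals --- to a Chiba-type theorem over infinite (not uncountable) centers, and to a uniform bound on exponents --- are precisely the missing ingredients that keep Conjecture~\ref{c1.2} open.
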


Recall that a field $F$ is {\it locally finite} if every finite subset of $F$ generates over its prime subfield $P$ a finite subfield.  Clearly, $F$ is locally finite if and only if $P$ is finite and $F$ is algebraic over $P$. Some authors also call a locally finite field  an {\it absolute field} (e.g. see \cite{Pa_GoSh_09}). Fields of zero characteristic are trivial examples of  fields that are not locally finite. Non-trivial examples of such a kind are  uncountable fields \cite[Corollary B-2.41, p. 343]{rotman}. 

Note that in the case $n\ge 2$, the assumption for $D$ in Conjecture \ref{c1.2} is really necessary because in this case,  the group $\GL_n(F)$ always satisfies a non-trivial GPCGI. To see this, we assume that $n\ge 2$ and $D=F$ is a locally finite field. Then, for every element $a\in \GL_n(F)$, the subfield $P_a$ of $F$ generated by all entries of $a$ over the prime subfield $P$ of $F$ is finite. Moreover, $a\in \GL_n(P_a)$ which implies $a^m=1$, where $m=|\GL_n(P_a)|$. Hence, $\GL_n(F)$ satisfies the GPCGI $w(x)=x$.

In 1978, Herstein \cite[Conjecture 2]{her} conjectured that if every multiplicative commutator in a division ring $D$ is radical over the center $F$ of $D$, then $D$ is a field (recall that an element $x\in D$ is \textit{radical} over $F$ if there exists a positive integer $n(x)$ depending on $x$ such that $x^{n(x)}\in F$, and a subset $S$ of $D$ is radical over $F$ if every its element is radical over $F$). Clearly, this conjecture may be considered as
a particular case of Conjecture \ref{c1.2}. In fact, it is the case if $n=1$, $N=D^*$ and $w(x,y)=xyx^{-1}y^{-1}$. We should note that even this case is not solved in general. Some special cases of Conjecture~\ref{c1.2} were mentioned in several papers such as \cite{Pa_Bi_16,Pa_Bi_15, Pa_BiDu_14,chiba_88, gon-ma_86, Ma_Li_85}.

In the case when $n\ge 2$ and $F$ is infinite, by \cite[Theorem 3.3]{Pa_NgBiHa_2016}, we can assume that $N$ is normal in $\GL_n(D)$ . The answer to  Conjecture \ref{c1.2} is affirmative if $n=1$, $N$ is subnormal in $D^*$ and either $D$ is centrally finite or $F$ is uncountable \cite{Pa_Bi_16}. The aim of this note is to support some positive cases of this conjecture.

In the last section, we consider a special case when all coefficients of the generalized power central group identity are $1$. Actually, in this case, we focus on a general topic: the existence of non-cyclic free subgroups in almost subnormal subgroups of division rings with uncountable center. 

Finally, we briefly observe the relations of some classes of division rings we consider in this paper.  By definition, a centrally finite division ring is locally finite, a locally finite division ring is both weakly locally finite and algebraic over its center. It is well-known in the literature that there exists a various number of locally finite division rings that are not centrally finite. In \cite{Pa_Bi2_15}, Deo et al. have constructed infinitely many weakly locally finite division rings that are not algebraic over their centers. So, in particular, the class of weakly locally finite division rings  strictly contains the class of locally finite division rings. In 1941, Kurosh \cite[Problem K]{kurosh} asked whether an algebraic division ring is locally finite? Up to present, this question remains without the  definite answer and it is now often referred to as the Kurosh Problem for division rings. The following chart reflects the relations between these classes of division rings.

\begin{center}
	\unitlength=1mm
	\special{em:linewidth 0.4pt}
	\linethickness{0.4pt}
	\begin{picture}(120.00,20.00)(0,0)
	\put(0,5){\fbox{Centrally finite}}
	\put(30,6){\vector(1,0){15}}
	\put(35,7){\small{(1)}}
	\put(45,5){\fbox{Locally finite}}
	\put(71,8){\vector(3,1){20}}
	\put(71,4){\vector(3,-1){20}}
	\put(80,13){\small{(2)}}
	\put(80,2){\small{(3)}}
	\put(91,14){\fbox{Algebraic}}
	\put(91,-4){\fbox{Weakly locally finite}}
	\end{picture}
\end{center}
\vspace*{1cm}

In this chart, the arrows mean the inclusions. Note that arrows (1) and (3) are not reversible by corresponding counter-examples, while the question whether arrow (2) is reversible is exactly the Kurosh Problem for division rings.

\section{Almost subnormal subgroups of the general linear group over a weakly locally finite division ring}

In this section, we give the affirmative answer to Conjecture \ref{c1.2} for weakly locally finite division rings. Recall that a division ring $D$ is \textit{weakly locally finite} if every its finite subset generates a centrally finite division subring.
Note that the class of weakly locally finite division rings  strictly contains the class of locally finite division rings (see \cite{Pa_Bi2_15}). We need two auxiliary lemmas whose proofs  are similar to the proofs of \cite[Lemma 2.1]{Pa_Bi_16} and \cite[Proposition 2.3]{Pa_Bi_16}, so they can be omitted. 

\begin{lemma}\label{l5.2} Let $D$ be a division ring with center $F$,  $w(x_1,x_2,\dots,x_m)$  a generalized group monomial over ${\rm GL}_n(D)$. Assume that $H$ is a subgroup of ${\rm GL}_n(D)$ such that $w$ is a {\rm GPCGI} of $H$. Then, $w$ is non-trivial if and only if $w^p\not\in \M_n(D)$ for any positive integer $p$, that is, $w^p$ is a generalized group monomial for any positive integer $p$.
\end{lemma}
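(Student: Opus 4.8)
The plan is to reduce the statement to a purely formal fact about the element $w^p$ of the generalized Laurent polynomial ring $\M_n(D)_F[X,X^{-1}]=\M_n(D)*_F F[X,X^{-1}]$, and then to compare it with the definition of non-triviality. Recall that, by definition, $w$ is a non-trivial GPCGI of $H$ exactly when $w^p\in\M_n(D)_F[X,X^{-1}]\setminus\M_n(D)$ --- equivalently $w^p\notin\M_n(D)$ --- for every positive integer $p$. So it is enough to prove, for each fixed $p$, that \emph{$w^p\notin\M_n(D)$ if and only if $w^p$ is a generalized group monomial over $\GL_n(D)$}; one implication here is immediate, since a generalized group monomial lies in $\M_n(D)_F[X,X^{-1}]\setminus\M_n(D)$ by definition, so the content is the converse.

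First I would fix a presentation $w=a_1x_{i_1}^{n_1}a_2x_{i_2}^{n_2}\cdots a_tx_{i_t}^{n_t}a_{t+1}$ with all $a_j\in\GL_n(D)$, and observe that every $a_j$ and every $x_{i_j}$ is a unit of $\M_n(D)_F[X,X^{-1}]$; hence $w$, and with it every $w^p$, is a unit of that ring, so $w^p\neq 0$. Expanding the power,
$$w^p=a_1x_{i_1}^{n_1}\cdots a_tx_{i_t}^{n_t}(a_{t+1}a_1)x_{i_1}^{n_1}\cdots a_tx_{i_t}^{n_t}(a_{t+1}a_1)\cdots a_tx_{i_t}^{n_t}a_{t+1},$$
which is a product of the form $b_1x_{j_1}^{m_1}b_2x_{j_2}^{m_2}\cdots b_sx_{j_s}^{m_s}b_{s+1}$ in which each coefficient $b_\ell$ is a product of some of the $a_j$, hence lies in $\GL_n(D)$.

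Next I would bring this word into reduced form in the free product amalgamated over $F=Z(\M_n(D))$. The only reductions available are those in which a coefficient lying between two powers of the same variable happens to be central: then $x_{j_{\ell-1}}^{m_{\ell-1}}b_\ell x_{j_\ell}^{m_\ell}=b_\ell x_{j_{\ell-1}}^{m_{\ell-1}+m_\ell}$, the central scalar $b_\ell$ merges into the neighbouring coefficients, and when $m_{\ell-1}+m_\ell=0$ the variable disappears and the collapse may cascade. The key point is that at every stage the coefficients remain products of the original $a_j$, hence stay in $\GL_n(D)$. Two outcomes are possible. If all the variables get eliminated, then $w^p\in\M_n(D)$ --- and, since $w^p$ is a unit of $\M_n(D)_F[X,X^{-1}]$ while a non-invertible square matrix over a division ring has a nonzero right annihilator, in fact $w^p\in\GL_n(D)$ --- so $w^p$ is not a generalized group monomial; otherwise some nontrivial power of a variable survives, and the reduced expression exhibits $w^p$ as a generalized group monomial over $\GL_n(D)$, with $w^p\notin\M_n(D)$. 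This settles the equivalence, and running it over all $p$ gives the lemma.

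The argument is essentially bookkeeping, and the one point that really needs care --- the only genuine obstacle --- is controlling the reductions in $\M_n(D)*_F F[X,X^{-1}]$: one must check that no unforeseen cancellation occurs and that the surviving coefficients are genuinely invertible. Since one factor is the group algebra $F[X,X^{-1}]$ of a free group, this can be done directly, without invoking the general normal-form theory for free products of rings; alternatively one transcribes the corresponding argument in \cite[Lemma 2.1]{Pa_Bi_16}, as the authors indicate.
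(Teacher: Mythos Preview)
Your proposal is correct and follows essentially the same approach the paper has in mind: the paper omits the proof entirely, referring the reader to \cite[Lemma~2.1]{Pa_Bi_16}, and your argument is precisely an elaboration of that lemma adapted to $\M_n(D)$ in place of $D$. The reduction-in-the-free-product bookkeeping you describe, together with the observation that all intermediate coefficients remain products of the $a_j$ and hence invertible, is exactly the mechanism behind the cited result.
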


\begin{lemma}\label{l5.4} Let $D$ be a division ring, and $H$ a subgroup of ${\rm GL}_n(D)$. If $H$ satisfies a non-trivial {\rm GPCGI}, then $H$ satisfies a {\rm GPCGI} of the form $$a_1x_{i_1}^{n_1}a_2x_{i_2}^{n_2}\dots a_tx_{i_t}^{n_t}a_{t+1},$$ where $i_1\ne i_{t}$.
\end{lemma}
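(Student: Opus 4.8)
The plan is to reshape $w$ by repeatedly applying two operations, each of which carries a {\rm GPCGI} of $H$ to another {\rm GPCGI} of $H$: (i) conjugating $w$ by a unit of the ring $R:=\M_n(D)_F[X,X^{-1}]$, that is, by a matrix in $\GL_n(D)$ or by a power $x_i^{k}$ of an indeterminate; and (ii) replacing $x_{i_1}$ by $y\,x_{i_1}\,y^{-1}$, where $y$ is a new indeterminate. Operation (i) is legitimate because $F$ is central in $R$: if $w(c_1,\dots,c_m)^p\in F$, then, writing $u\in\GL_n(D)$ for the value at $(c_1,\dots,c_m)$ of the conjugator, we get $\bigl(u\,w(c_1,\dots,c_m)\,u^{-1}\bigr)^p=u\,w(c_1,\dots,c_m)^p\,u^{-1}\in uFu^{-1}=F$. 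Operation (ii) is legitimate because $x_{i_1}\mapsto yx_{i_1}y^{-1}$ (fixing the other indeterminates) extends to an $F$-algebra automorphism of $R$ that fixes $\M_n(D)$ pointwise --- hence preserves non-triviality --- and because specialising $y$ to an element $d\in H$ simply replaces $c_{i_1}$ by $dc_{i_1}d^{-1}\in H$. I assume throughout that every exponent of $w$ is nonzero and that $w$ is in reduced form in $R$. If $i_1\ne i_t$ there is nothing to do, so assume $i_1=i_t$.

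Suppose first that $H$ is radical over $F$, i.e.\ every element of $H$ has a power in $F$. Then I claim the commutator identity $v(x_1,x_2):=x_1x_2x_1^{-1}x_2^{-1}$ works. For $c_1,c_2\in H$, the group $H/(H\cap F)$ is torsion, so the image of $[c_1,c_2]$ there has a finite order $r$, whence $[c_1,c_2]^{r}\in F$; thus $v$ is a {\rm GPCGI} of $H$. Every power of $v$ contains negative exponents, hence lies in $\M_n(D)_F[X,X^{-1}]\backslash\M_n(D)$, so $v$ is non-trivial; and $v$ has the required shape, with first index $1$ different from its last index $2$.

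Suppose now that $H$ is not radical over $F$. Using operation (i) I first cyclically reduce $w$ to a cyclically reduced word $w^{\circ}$, still a {\rm GPCGI} of $H$. This $w^{\circ}$ is neither $1$ nor a constant: otherwise $w=ucu^{-1}$ for a unit $u$ of $R$ and a fixed $c\in\M_n(D)$, and evaluation at the identity tuple gives $c^{p}\in F$ for some $p$, so that $w^{p}=c^{p}\in\M_n(D)$, contradicting non-triviality. If $w^{\circ}$ involves two distinct indeterminates, then, being cyclically reduced, some cyclic rotation of $w^{\circ}$ (again operation (i)) has its first indeterminate different from its last; such a word --- nonzero exponents, distinct end indeterminates --- is automatically a {\rm GPCGI} of $H$ of the required shape and is non-trivial, since no cancellation can occur at the junctions of its powers. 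Otherwise $w^{\circ}=c_0 x^{m_1}c_1x^{m_2}\cdots c_{s-1}x^{m_s}c_s$ uses a single indeterminate $x$; here some coefficient must lie outside $F$, for if all of them were in $F$ then, scalars commuting past the powers of $x$ and $w^{\circ}$ being reduced, we would be forced to $s=1$, i.e.\ $w^{\circ}=\lambda x^{m_1}$ with $\lambda\in F$ and $m_1\ne0$, which says exactly that $H$ is radical over $F$, contrary to assumption. Now operation (ii) together with a conjugation turns a non-scalar coefficient into an uncancellable block $y^{-1}ay$ with $a\notin F$ in the resulting word: if $w^{\circ}$ has $\ge 2$ powers of $x$, some interior coefficient is non-scalar and such a block sits between two powers of $x$; and if $w^{\circ}=c_0 x^{m_1}c_1$, a conjugation brings it to $x^{m_1}(c_1c_0)$ with $c_1c_0\notin F$ (otherwise $H$ is radical over $F$), and (ii) followed by conjugating by $y$ yields $x^{m_1}y^{-1}(c_1c_0)y$. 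After cyclically reducing, this word genuinely involves two indeterminates, so we are back in the case already treated.

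The step I expect to require real care is the combinatorics inside the free product $\M_n(D)*_FF[X,X^{-1}]$: making precise ``reduced'' and ``cyclically reduced'' there, noting that two powers of one indeterminate merge across a coefficient precisely when that coefficient is a scalar matrix --- so that ``cyclically reduced'' and ``genuinely two-indeterminate'' behave as expected --- and checking that each rotation and the substitution (ii) carry the a priori tuple-dependent exponent $p$ through correctly. Conceptually, the only shape that resists being rotated into the desired form is the degenerate $w^{\circ}\sim\lambda x^{m}$ with $\lambda$ scalar, which is exactly the case ``$H$ radical over $F$'' disposed of at the outset by the commutator identity. This follows the pattern of \cite[Proposition 2.3]{Pa_Bi_16}.
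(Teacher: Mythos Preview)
The paper omits the proof entirely, pointing to \cite[Proposition~2.3]{Pa_Bi_16} as the model; your proposal spells out precisely that argument---the split into ``$H$ radical over $F$'' (handled by the commutator word $x_1x_2x_1^{-1}x_2^{-1}$) versus the general case (handled by cyclic reduction together with the substitution $x\mapsto yxy^{-1}$)---and is correct. The only cosmetic point is that in the radical case you can skip the quotient $H/(H\cap F)$: since $H$ is a subgroup, $[c_1,c_2]\in H$ already has a power in $F$ by hypothesis.
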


Let $D$ be a division ring with center $F$ and $N$ an almost subnormal subgroup of $\GL_n(D)$ with almost normal series
$$N=N_r\leq N_{r-1}\leq \cdots\leq N_1=\GL_n(D).$$ For any $a\in N$, we construct a sequence of elements $c_i(a,x)$ in $\M_n(D)_F[x,x^{-1}]$ by induction on $i$ as the following.

Set $c_1(a,x)=axa^{-1}x^{-1}$. Further, for any $i, 1<i\leq r$, set $$c_i(a,x)=c_{i-1}(a,x)ac_{i-1}(a,x)^{-1}a$$if $N_i$ is normal in $N_{i-1}$ and $$c_i(a,x)=c_{i-1}(a,x)^{\ell_i}$$ if $N_i$ has finite index in $N_{i-1}$ with $\ell_i=[N_{i-1}:N_i] !$.

\begin{lemma}\label{l7.1}
	Let $N_1,N_2,\dots,N_r$ and $c_1(a,x), c_2(a,x),\dots, c_r(a,x)$ be as above.  Then
	\begin{enumerate}
		\item The monomial $c_i(a,x)$ is represented in the form 
		$$c_i(a,x)=a^{n_1}x^{m_1} a^{n_2}x^{m_2}\cdots a^{n_t}x^{m_t}, \eqno(*)$$
		where $n_j,m_j\in \{1,-1\}$ and $n_1=1$;
		\item if $a\in \GL_n(D)\backslash F$, then $c_i(a,x)$ and $u_i(a,x)=a^{-1}c_i(a,x)$ are generalized group monomials over $\GL_n(D)$ for every $i\ge 1$;
		\item we have $c_i(a,b), u_i(a,b)\in N_i$ for every $b\in \GL_n(D)$.
	\end{enumerate}
\end{lemma}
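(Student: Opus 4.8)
The plan is to prove all three statements simultaneously by induction on $i$, since the claims are intertwined: the structural form $(*)$ is what makes the coefficients visibly invertible, and membership in $N_i$ is what lets the inductive step on the group series go through. For the base case $i=1$ we have $c_1(a,x)=axa^{-1}x^{-1}=a^{1}x^{1}a^{-1}x^{-1}$, which is exactly of the form $(*)$ with $t=2$, $n_1=1$, $m_1=1$, $n_2=-1$, $m_2=-1$; since $a\in\GL_n(D)\backslash F$ the entries $a$ and $a^{-1}$ lie in $\GL_n(D)$, so $c_1(a,x)$ is a generalized group monomial over $\GL_n(D)$, and $u_1(a,x)=a^{-1}axa^{-1}x^{-1}=xa^{-1}x^{-1}$ is too (here one uses Lemma~\ref{lem:1.1} to know that $a\notin F$ forces genuine non-centrality so that the monomial is not trivially in $\M_n(D)$, but for the bare statement of the lemma it suffices that all coefficients are units). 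For (3) with $i=1$: $c_1(a,b)=aba^{-1}b^{-1}$ is a commutator of elements of $\GL_n(D)=N_1$, hence lies in $N_1$; and $u_1(a,b)=ba^{-1}b^{-1}\in N_1$ as well. Trivially $N_1=\GL_n(D)$ so there is nothing more to check at this level.

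For the inductive step, assume the three claims hold for $i-1$ and split into the two cases in the construction.

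\emph{Case $N_i$ normal in $N_{i-1}$.} Here $c_i(a,x)=c_{i-1}(a,x)\,a\,c_{i-1}(a,x)^{-1}\,a$. Writing $c_{i-1}(a,x)=a^{n_1}x^{m_1}\cdots a^{n_t}x^{m_t}$ by the inductive form $(*)$, its inverse is $x^{-m_t}a^{-n_t}\cdots x^{-m_1}a^{-n_1}$, and concatenating the four blocks gives a word that is again an alternating product of $a^{\pm1}$ and $x^{\pm1}$ blocks; the exponents in the middle where $x^{m_t}\cdot a \cdot x^{-m_t}$ meet may collapse ($x^{m_t}$ adjacent to $x^{-m_t}$ would cancel, but they are separated by the inserted $a$, so no collapse there) — I will check that after collecting consecutive equal-base factors the result is still of the form $(*)$, noting that the leftmost factor is still $a^{n_1}=a^{1}$, so $n_1=1$ is preserved. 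All exponents remain in $\{1,-1\}$ because at each junction at most two equal-base powers $a^{\pm1}a^{\pm1}$ could meet, and by the structure of the construction (the appended $a$'s flank an inverse block) the signs work out so that no exponent of absolute value $2$ survives after cancellation; this is the one routine-but-fiddly bookkeeping point. Claim (2) for $c_i$ is then immediate from $(*)$ since every coefficient is a power of $a\in\GL_n(D)$, hence in $\GL_n(D)$; and $u_i=a^{-1}c_i$ is a generalized group monomial because left-multiplying by $a^{-1}$ merely changes the leading coefficient $a^{n_1}=a$ to $a^{n_1-1}=1$, i.e. deletes it, leaving a shorter word of the same type still with unit (indeed identity) coefficients. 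For (3): by the inductive hypothesis $c_{i-1}(a,b)\in N_{i-1}$, and since $a\in N\subseteq N_i\trianglelefteq N_{i-1}$, the element $c_{i-1}(a,b)\,a\,c_{i-1}(a,b)^{-1}$ is a conjugate of $a\in N_i$ by an element of $N_{i-1}$, hence lies in $N_i$; multiplying by $a\in N_i$ keeps us in $N_i$, so $c_i(a,b)\in N_i$, and $u_i(a,b)=a^{-1}c_i(a,b)\in N_i$ likewise.

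\emph{Case $[N_{i-1}:N_i]=\ell<\infty$.} Here $c_i(a,x)=c_{i-1}(a,x)^{\ell_i}$ with $\ell_i=\ell!$. The $\ell_i$-fold product of the word $(*)$ for $c_{i-1}$ is again an alternating $a^{\pm1},x^{\pm1}$ word: at each seam the last factor $x^{m_t}$ of one copy meets the first factor $a^{n_1}=a$ of the next, and since these have different bases there is no cancellation, so the concatenation is already in reduced alternating form with all exponents in $\{1,-1\}$ and leading exponent $n_1=1$; this establishes (1). Statements (2) follow exactly as before — powers of $a$ are units, and $u_i=a^{-1}c_i$ deletes the leading $a$. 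For (3): $c_{i-1}(a,b)\in N_{i-1}$ by induction, and since $N_i$ has index $\ell$ in $N_{i-1}$, the order of the coset $c_{i-1}(a,b)N_i$ in the (not necessarily group, but) coset space divides $\ell!=\ell_i$ — more precisely, by Lagrange's theorem applied in $N_{i-1}/\mathrm{Core}_{N_{i-1}}(N_i)$, or simply because any element of a group of order dividing $\ell!$ raised to the $\ell!$ power is trivial, $c_{i-1}(a,b)^{\ell_i}\in N_i$; hence $c_i(a,b)\in N_i$ and $u_i(a,b)=a^{-1}c_i(a,b)\in N_i$ since $a\in N\subseteq N_i$.

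\emph{Main obstacle.} The only genuine difficulty is the bookkeeping in the normal case: verifying that after splicing $c_{i-1}$, $a$, $c_{i-1}^{-1}$, $a$ together and collapsing adjacent equal-base powers, every surviving exponent stays in $\{1,-1\}$ and the leading exponent stays $1$. I expect this to reduce to tracking the last exponent $m_t$ and the sign pattern at the four seams, and it should go through cleanly once one observes that the two appended $a$-factors are each adjacent on one side to an $x^{\pm1}$-block (so they never merge into $a^{\pm2}$), except possibly at the very last seam where the trailing $a$ meets nothing — which is fine. Everything else (the unit-coefficient claims and the group-membership claims) is a direct consequence of $(*)$ together with normality, respectively the finite-index/Lagrange argument.
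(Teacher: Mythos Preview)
Your proposal is correct and follows the same inductive scheme as the paper; the paper in fact only writes out the induction for (1) and declares that (2) and (3) ``follow directly,'' whereas you spell those out as well.

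One clarification on what you flag as the \emph{main obstacle}: the bookkeeping in the normal case is simpler than you suggest, and your description of the trailing $a$ as ``meeting nothing'' is slightly off. Since by the inductive hypothesis $n_1=1$, the inverse $c_{i-1}(a,x)^{-1}$ ends in $a^{-n_1}=a^{-1}$, so in the product
\[
c_{i-1}(a,x)\cdot a\cdot c_{i-1}(a,x)^{-1}\cdot a
=\bigl(ax^{m_1}\cdots a^{n_t}x^{m_t}\bigr)\,a\,\bigl(x^{-m_t}a^{-n_t}\cdots x^{-m_1}a^{-1}\bigr)\,a
\]
the trailing $a$ cancels exactly against that $a^{-1}$, and the middle $a$ sits between $x^{m_t}$ and $x^{-m_t}$. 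Hence the result is
\[
ax^{m_1}a^{n_2}\cdots a^{n_t}x^{m_t}\,a\,x^{-m_t}a^{-n_t}\cdots a^{-n_2}x^{-m_1},
\]
which is already in reduced alternating form with all exponents in $\{1,-1\}$ and leading exponent $1$; no $a^{\pm2}$ can arise. So the ``fiddly'' seam analysis collapses to this single cancellation, and the rest of your argument goes through as written.
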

\begin{proof}
	We prove (1) by induction on $i$.  By definition, $c_1(a,x)=axa^{-1}x^{-1}$ has the form $(*)$. Assume that $i>1$ and $c_{i-1}$ has the form $(*)$, that is, 
	$$c_{i-1}(a,x)=a^{n_1}x^{m_1} a^{n_2}x^{m_2}\cdots a^{n_t}x^{m_t}, $$
	where $n_j,m_j\in \{1,-1\}$ and $n_1=1$. We will show that $c_i(a,x)$ has the form $(*)$ too. Indeed, there are two cases to examine.
	
	\begin{enumerate}[(i)]
		\item If $N_i$ is normal in $N_{i-1}$, then 
		
		$c_i(a,x)=c_{i-1}(a,x)ac_{i-1}(a,x)^{-1}a$
		
		$=(ax^{m_1} a^{n_2}x^{m_2}\cdots a^{n_t}x^{m_t})a (x^{-m_t}a^{-n_t}x^{-m_{t-1}} a^{-n_{t-1}}\cdots x^{-m_1}a^{-1})a$
		
		$=ax^{m_1} a^{n_2}x^{m_2}\cdots a^{n_t}x^{m_t}a x^{-m_t}a^{-n_t}x^{-m_{t-1}} a^{-n_{t-1}}\cdots x^{-m_1}$.
		
		\item If $N_i$ has finite index in $N_{i-1}$ with $\ell_i=[N_{i-1}:N_i]!$, then 
		
		$c_i(a,x)=c_{i-1}(a,x)^{\ell_i}$
		
		$=(ax^{m_1} a^{n_2}x^{m_2}\cdots a^{n_t}x^{m_t})^{\ell_i}$
		
		$=ax^{m_1} a^{n_2}x^{m_2}\cdots a^{n_t} x^{m_t}\cdots ax^{m_1} a^{n_2}x^{m_2}\cdots a^{n_t} x^{m_t}$.
	\end{enumerate}
	
	We see that in both cases  $c_i(a,x)$ has the form $(*)$, so the proof of (1) is complete. The proofs of (2) and (3) follow directly from (1).
\end{proof}

Using these three lemmas, we can prove the following very useful result.
\begin{lemma}\label{lem:2.3} Let $D$ be a division ring with infinite center $F$ and $N$ a non-central almost subnormal subgroup of ${\rm GL}_n(D)$. If $N$ satisfies some non-trivial {\rm GPCGI}, then so does ${\rm GL}_n(D)$. 
\end{lemma}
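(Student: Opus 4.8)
My plan is to transport the identity from $N$ up to $\GL_n(D)$ by substituting the iterated monomials $c_r(a,x)$ of Lemma~\ref{l7.1} for the variables. Fix an almost normal series $N=N_r\le N_{r-1}\le\cdots\le N_1=\GL_n(D)$. Since $N$ is non-central and $Z(\GL_n(D))=F^*$, there is an element $a\in N$ with $a\notin F$. Let $w=a_1x_{i_1}^{n_1}a_2x_{i_2}^{n_2}\cdots a_tx_{i_t}^{n_t}a_{t+1}$ (with $t\ge 1$ and all $n_j\ne 0$, since $w$ is a generalized group monomial and hence not in $\M_n(D)$) be a non-trivial {\rm GPCGI} of $N$, and put
\[
W(x_1,\dots,x_m):=w\bigl(c_r(a,x_1),\dots,c_r(a,x_m)\bigr),
\]
the monomial obtained from $w$ by replacing each $x_j$ with $c_r(a,x_j)$. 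By Lemma~\ref{l7.1}(2) --- which is exactly where $a\notin F$ is used --- every $c_r(a,x_j)$ is a generalized group monomial over $\GL_n(D)$, hence so is $W$.

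The easy half is that $W$ is a {\rm GPCGI} of $\GL_n(D)$: given $(b_1,\dots,b_m)\in\GL_n(D)^m$, Lemma~\ref{l7.1}(3) yields $c_r(a,b_j)\in N_r=N$ for all $j$, and since $w$ is a {\rm GPCGI} of $N$ there is a positive integer $p$ with $W(b_1,\dots,b_m)^p=w\bigl(c_r(a,b_1),\dots,c_r(a,b_m)\bigr)^p\in F$.

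The substantive step --- and the one I expect to be the main obstacle --- is to show that $W$ is \emph{non-trivial}; by Lemma~\ref{l5.2} it is enough to prove $W^p\notin\M_n(D)$ for every $p\ge 1$. For this I would use the explicit shape of $c_r(a,x)$ given in Lemma~\ref{l7.1}(1). A short induction along the construction shows that $c_r(a,x)$ begins with the word $c_1(a,x)=axa^{-1}x^{-1}$: each $c_i(a,x)$ begins with $c_{i-1}(a,x)$ --- whether it arises from the commutator step or from the power step --- and no cancellation occurs at that initial junction; dually, $c_r(a,x)$ ends in $x^{-1}$, so $c_r(a,x)^{-1}$ begins with $x\,a^{\pm1}$. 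Consequently the first occurrence of $x_{i_1}$ in $W$, which lies inside the leftmost block $c_r(a,x_{i_1})^{n_1}$, is immediately followed by a factor $a^{\pm1}\in\GL_n(D)$, and this factor is non-central because $a\notin F$. By the structure of reduced expressions in $\M_n(D)_F[X,X^{-1}]$ --- in which the subgroup generated by $\GL_n(D)$ and the variables is an amalgamated free product of $\GL_n(D)$ with a free-group factor over the central subgroup $F^*$ --- such an occurrence of $x_{i_1}$, flanked on its right by a non-scalar coefficient from $\GL_n(D)$, can never be cancelled, and the same is true of the leftmost copy of $W$ inside any power $W^p$. Hence $W^p$ still genuinely involves $x_{i_1}$, so $W^p\notin\M_n(D)$ for all $p$; thus $W$ is a non-trivial {\rm GPCGI} of $\GL_n(D)$, which is what is wanted.

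The one point needing real care is that last cancellation claim: it rests on a precise description of the normal form of products in $\M_n(D)_F[X,X^{-1}]$ and, in particular, of how scalar coefficients may be absorbed into $X$-factors while non-scalar ones cannot --- exactly the point at which it is legitimate to invoke the results of \cite{golubchik,Pa_To_85}, and where the standing hypothesis that $F$ is infinite enters. If one prefers to avoid even that analysis, an alternative is to normalize $w$ more aggressively --- first arranging $i_1\ne i_t$ via Lemma~\ref{l5.4} and, in addition, cancelling every pair of adjacent equal-variable blocks that carries a scalar coefficient --- after which every junction in every power $W^p$ is automatically reduced; the price is then a short verification that this extra normalization does not destroy the condition $i_1\ne i_t$.
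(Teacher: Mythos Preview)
Your overall strategy --- substitute the iterated monomials of Lemma~\ref{l7.1} into $w$ to push the identity from $N$ up to $\GL_n(D)$ --- is exactly the paper's strategy, and the ``easy half'' is identical. The difference is entirely in how non-triviality of the resulting monomial is secured, and here the paper's execution is tighter than your primary argument. The paper does three things up front: (i) it invokes Lemma~\ref{l5.4} to arrange $i_1\ne i_t$; (ii) it substitutes $x_i\mapsto x_ix_{m+i}$, which forces every exponent in the rewritten word to lie in $\{1,-1\}$ and makes consecutive variable indices distinct; and (iii) it substitutes not $c_r(a,y_j)$ but $u_r(a,y_j)=a^{-1}c_r(a,y_j)$, which by Lemma~\ref{l7.1}(1) begins \emph{and} ends with a power of $y_j$. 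After (i)--(iii) the reduced form of $w'$ begins with $y_{j_1}^{\pm1}$ and ends with $y_{j_s}^{\pm1}$ with $j_1\ne j_s$, so in any power $w'^p$ the junction between consecutive copies of $w'$ is $y_{j_s}^{\pm1}(b_{s+1}b_1)y_{j_1}^{\pm1}$ with distinct variable indices, and no cancellation can cross it. Non-triviality then follows from Lemma~\ref{l5.2} with no further combinatorics.

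Your primary route avoids all three preprocessing steps and instead relies on a ``first-occurrence'' argument: the leftmost $x_{i_1}$ in $W$ is followed by a non-central $a^{\pm1}$ and therefore survives in every $W^p$. This is the soft spot. The fact that a non-central coefficient sits to the \emph{right} of that $x_{i_1}$ does not by itself prevent cancellation coming from the \emph{left} across the junction between copies of $W$; if $i_1=i_t$ and the boundary coefficient $a_{t+1}a_1a^{\pm1}$ happens to be central, cancellation may well eat through that first occurrence, and one then has to chase the reduction further inward. Your ``alternative'' paragraph --- normalize via Lemma~\ref{l5.4} and arrange exponents and adjacent indices so that every junction in $W^p$ is already reduced --- is essentially what the paper does, and is not an optional refinement but the cleanest way to close the gap. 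Concretely: replace $c_r$ by $u_r$ (so the substituted block starts and ends with the variable), apply Lemma~\ref{l5.4} first, and perform the doubling $x_i\mapsto x_ix_{m+i}$; then the amalgamated-free-product normal form argument you sketch becomes a one-line observation rather than a delicate cancellation analysis.
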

\begin{proof}
Let $$w(x_1,x_2,\dots, x_m)=a_1x_{i_1}^{\alpha_1}a_2x_{i_2}^{\alpha_2}\cdots a_tx_{i_t}^{\alpha_t}a_{t+1}$$ be a non-trivial GPCGI of $N$. By Lemma~\ref{l5.4}, we can assume  $i_1\ne i_t$.  Replacing $x_i$ by  $x_ix_{m+i}$, where $x_{m+i}$ is an indeterminate, we have $$w(x_1,x_2,\dots, x_m,x_{m+1},\dots,x_{2m})=b_1x_{j_1}^{\beta_1}b_2x_{j_2}^{\beta_2}\cdots b_sx_{j_s}^{\beta_s}b_{s+1},$$ where $\beta_j\in \{1,-1\}$. 
Let $a\in N\backslash F, y_1, y_2, \dots, y_m$ be $m$ indeterminates, and  $u_r(a,y)=a^{-1}c_r(a,y)$  as in Lemma~\ref{l7.1}. Then, by Lemma~\ref{l7.1}, $u_r(a,b)\in N$ for every $b\in \GL_n(D)$, so  if $$w'(y_1,y_2,\dots, y_m)=w(u_r(a,y_1),u_r(a,y_2),\dots, u_r(a,y_m)),$$ then $w'$ is a GPCGI of $\GL_n(D)$. Moreover, since $u_r(a,y_i)$ is a generalized group monomial over $\GL_n(D)$, and both the beginning and end of $u_r(a,x)$ are $y_i$, one has $w'\not\in \GL_n(D)$. Observe that $i_1\ne i_t$, so  $w'^p\not \in \GL_n(D)$ for any integer $p\ge 1$. 
Therefore, $w'$ is a non-trivial GPCGI of $\GL_n(D)$ by Lemma~\ref{l5.2}. The proof of the  lemma is now complete.
\end{proof}

In the remaining part of this section, we assume that $D$ is a weakly locally finite division ring.
\begin{lemma}\label{l5.1}
	Let $D$ be a weakly locally finite division ring with center $F$ and let $P$ be the prime subfield of $F$. If $S$ is a finite subset of $D$, then there exists a centrally finite division subring $D_1$ of $D$ containing $S$ such that the center $F_1=Z(D_1)$ is finitely generated over $P$. Additionally, if $D$ is not a locally finite field, then so is $F_1$.
\end{lemma}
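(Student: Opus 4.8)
The plan is to build $D_1$ as the division subring generated by a suitable \emph{finite} set of elements, and then invoke the definition of weak local finiteness to conclude $D_1$ is centrally finite. First I would enlarge the given finite set $S$: write $S = \{s_1, \dots, s_k\}$ and let $D_1$ be the division subring of $D$ generated by $S$. By the definition of weakly locally finite, $D_1$ is centrally finite, so $[D_1 : F_1] < \infty$ where $F_1 = Z(D_1)$. The first real point is to show $F_1$ is finitely generated over the prime field $P$. Since $D_1$ is a finite-dimensional $F_1$-vector space, pick an $F_1$-basis $e_1, \dots, e_d$ of $D_1$ and express each product $e_i e_j = \sum_\ell \gamma_{ij}^\ell e_\ell$ with structure constants $\gamma_{ij}^\ell \in F_1$; also express each generator $s_\mu = \sum_\ell \delta_\mu^\ell e_\ell$. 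Let $F_0 \subseteq F_1$ be the subfield of $F_1$ generated over $P$ by all the finitely many $\gamma_{ij}^\ell$ and $\delta_\mu^\ell$. Then $F_0$ is finitely generated over $P$, and the $F_0$-span of $e_1, \dots, e_d$ is a ring containing all of $S$ and closed under the ring operations; I would check it is in fact a division ring (a finite-dimensional $F_0$-algebra that is a domain is a division ring), hence equals $D_1$ by minimality. This forces $F_1 = Z(D_1) = F_0$ (or at least $F_1$ is a finite algebraic extension of $F_0$, still finitely generated over $P$), giving the finite generation claim.

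For the additional assertion, I would argue contrapositively: suppose $F_1$ \emph{is} a locally finite field. Being finitely generated over $P$ and locally finite, $F_1$ is then a finite field. A centrally finite division ring with finite center is finite (by Wedderburn's little theorem, a finite division ring is commutative, but more to the point $D_1$ finite-dimensional over a finite field is a finite set), hence $D_1$ is a finite division ring, so $D_1$ is a finite field. But if $D$ is not a locally finite field, I want to derive a contradiction — and here is the subtlety: a single finitely generated division subring being a finite field does \emph{not} by itself contradict $D$ not being locally finite. So the statement as phrased must be read as: \emph{one can choose} $D_1$ (containing the given $S$) so that $F_1$ is not locally finite. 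The fix is to enlarge $S$ first. Since $D$ is not a locally finite field, either $\Char D = 0$, in which case any $D_1$ has $P = \Q$ and $F_1 \supseteq \Q$ is automatically not locally finite; or $\Char D = p > 0$ and $D$ is not algebraic over $\F_p$, so there exists $t \in D$ transcendental over $\F_p$ — and I would throw such a $t$ into $S$ before forming $D_1$, so that $F_1$ contains the subfield generated by $t$ (note $t$ is central in the centrally finite $D_1$ it generates together with $S$, or at worst $F_1$ contains an element transcendental over $\F_p$), making $F_1$ not algebraic over $P$, hence not locally finite.

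The main obstacle I anticipate is precisely the last point: ensuring, in positive characteristic, that the transcendental element $t$ we adjoin actually lands in the center $F_1$ of the division ring it helps generate, rather than merely in $D_1$. If $t$ is transcendental over the prime field but $D_1$ is centrally finite, then $t$ is algebraic over $F_1$ (since $[D_1:F_1] < \infty$), and a transcendental-over-$\F_p$ element algebraic over $F_1$ forces $F_1$ itself to contain an element transcendental over $\F_p$ (otherwise $F_1$ would be algebraic over $\F_p$ and so would $t$). That resolves it, but it requires care. A secondary routine check is the claim that the $F_0$-span of the basis is closed under inversion — this follows because it is a finite-dimensional $F_0$-algebra which is a domain (being a subring of the division ring $D_1$), and such algebras are division rings; I would cite this standard fact rather than reprove it.
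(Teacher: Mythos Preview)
Your proposal is correct and follows essentially the same strategy as the paper: take $D_1$ to be the division subring generated by (a suitable enlargement of) $S$, invoke weak local finiteness for central finiteness, and then argue that $F_1$ is finitely generated over $P$. Where the paper simply cites \cite[Lemma~2.6]{Pa_Bi_16} for the finite-generation step, you give the explicit structure-constant argument, which is the standard proof underlying that citation. For the additional assertion the paper splits into cases --- in the non-commutative case it adjoins $a,b$ with $ab\neq ba$ and invokes Jacobson's theorem directly to see $F_1$ cannot be locally finite, while in the commutative positive-characteristic case it adjoins a transcendental element --- whereas you treat positive characteristic uniformly by adjoining a transcendental $t$. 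Your route is slightly slicker, but note that it still leans on Jacobson's theorem implicitly: when $D$ is non-commutative in characteristic $p$, the existence of an element transcendental over $\F_p$ is exactly the contrapositive of Jacobson's commutativity theorem. So the two arguments are close cousins, and your handling of the ``does $t$ land in $F_1$?'' issue via algebraicity of $t$ over $F_1$ is clean and correct.
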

\begin{proof} Let $S$ be a finite subset of $D$. Since $D$ is weakly locally finite, the division subring $D_1=P(S)$ of $D$ is centrally finite. By \cite[Lemma 2.6]{Pa_Bi_16}, $F_1=Z(D_1)$ is finitely generated over $P$.
	
Assume that $D$ is not a locally finite field. If $D$ is non-commutative, then we choose two elements $a, b\in D$ such that $ab\neq ba$.  Replace $D_1=P(S)$ by $D_1=P(S\cup \{a,b\})$  in the previous part, we see that $D_1$ is a non-commutative centrally finite division subring. By Jacobson's theorem (e.g. see \cite[Theorem~13.11]{lam}),  the center $F_1$ of $D_1$ is not locally finite. Now, assume that $D$ is commutative. If $P=\Q$, then $D_1$ is not locally finite. If $P$ is finite, then choose an element $u\in D$ which is not algebraic over $P$, we have $D_1=P(S\cup\{u\})$ is a subfield of $D$ which is not locally finite. 
\end{proof}	 

\begin{theorem} \label{Th2.9}
	Let $D$ be a weakly locally  finite division ring with infinite center $F$ and assume that $N$ is an almost subnormal subgroup of ${\rm GL}_n(D)$. If $n\geq 2$, then we assume in addition that $D$ is not a locally finite field. If $N$ satisfies a non-trivial {\rm GPCGI}, then $N$ is central. 
\end{theorem}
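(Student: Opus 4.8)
The plan is to exploit the reduction lemmas just established: first pass from the almost subnormal subgroup $N$ to the full group $\GL_n(D)$, then descend to $\GL_n(D_1)$ for a suitable centrally finite division subring $D_1\le D$, and finally invoke the centrally finite instance of Conjecture~\ref{c1.2}. Suppose, for a contradiction, that $N$ satisfies a non-trivial GPCGI but is not central. By Lemma~\ref{lem:2.3}, the group $\GL_n(D)$ itself then satisfies a non-trivial GPCGI, say
$$w(x_1,\dots,x_m)=a_1x_{i_1}^{\alpha_1}a_2x_{i_2}^{\alpha_2}\cdots a_tx_{i_t}^{\alpha_t}a_{t+1},\qquad a_1,\dots,a_{t+1}\in\GL_n(D).$$

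Let $S\subseteq D$ be the finite set of all entries of $a_1,\dots,a_{t+1}$ and of $a_1^{-1},\dots,a_{t+1}^{-1}$, enlarged, in case $D$ is non-commutative, by two non-commuting elements $a,b\in D$. Applying Lemma~\ref{l5.1} to $S$ produces a centrally finite division subring $D_1$ of $D$ containing $S$, whose center $F_1=Z(D_1)$ is finitely generated over the prime subfield $P$ and is not a locally finite field whenever $D$ is not a locally finite field; moreover $D_1$ is non-commutative whenever $D$ is, since then $a,b\in D_1$. As $a_1,\dots,a_{t+1}\in\GL_n(D_1)$, the word $w$ is a generalized group monomial over $\GL_n(D_1)$, and since $\GL_n(D_1)\le\GL_n(D)$ it is a GPCGI of $\GL_n(D_1)$. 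If $w^p\in\M_n(D_1)$ for some $p\ge1$, then $w^p\in\M_n(D)$, contradicting non-triviality of $w$ over $\GL_n(D)$; hence, by Lemma~\ref{l5.2}, $w$ is a non-trivial GPCGI of $\GL_n(D_1)$.

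It remains to reach a contradiction. If $n=1$ and $D$ is commutative, then $\GL_1(D)=D^*$ is abelian and coincides with its own center, so $N$ is central, contrary to assumption. Otherwise $n\ge2$ or $D_1$ is non-commutative; in both situations $F_1$ is not a locally finite field (by hypothesis together with Lemma~\ref{l5.1} when $n\ge2$, and by Jacobson's theorem when $D_1$ is non-commutative), hence $F_1$ is infinite. Thus in every remaining case $D_1$ is a centrally finite division ring with infinite center $F_1$, with $F_1$ not a locally finite field when $n\ge2$, and $\GL_n(D_1)$ is a non-central almost subnormal subgroup of $\GL_n(D_1)$ carrying a non-trivial GPCGI. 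By the centrally finite case of Conjecture~\ref{c1.2} this forces $\GL_n(D_1)$ to be central, i.e.\ $n=1$ and $D_1=F_1$, which is impossible since we are in the case $n\ge2$ or $D_1\ne F_1$. Therefore $N$ is central.

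The bookkeeping above is routine: non-triviality of the GPCGI survives the descent to $\GL_n(D_1)$ because $\M_n(D_1)\subseteq\M_n(D)$, and $D_1$ inherits non-commutativity and the ``not a locally finite field'' property directly from the construction in Lemma~\ref{l5.1}. The substance of the theorem therefore rests entirely on the centrally finite case used in the final step. If that case were not available as a black box, proving it would be the main obstacle; a natural route is to split $D_1$ over a maximal subfield, so that $\GL_n(D_1)$ sits as a Zariski-dense subgroup of $\GL_{nd}(\overline{F_1})$ with $d=\deg D_1$, and then to upgrade the pointwise condition ``$w(\mathbf{g})$ is radical over $F_1$'' to a genuine bounded-exponent power-central identity on $\GL_{nd}(\overline{F_1})$, which is excluded once $nd\ge2$. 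The delicate point is that $\overline{F_1}$ need not be uncountable, so the naive Baire-category argument over an uncountable field has to be replaced by a specialization argument that uses that $F_1$ is finitely generated over $P$ but not a locally finite field.
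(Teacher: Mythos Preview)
Your reduction via Lemma~\ref{lem:2.3} and Lemma~\ref{l5.1} to a centrally finite division subring $D_1$ with center $F_1$ mirrors the paper's strategy, and the trivial case $n=1$, $D$ commutative is handled correctly. One small point you glide over: for $w$ to be a GPCGI of $\GL_n(D_1)$ \emph{relative to} $F_1$, you need that $w(c_1,\dots,c_m)^p$, which a priori lies only in $F\cap\M_n(D_1)$, actually lies in $F_1$; this holds because $F\cap D_1\subseteq Z(D_1)=F_1$, and the paper makes it explicit. (For $n=1$ the paper in fact stays in $D$ and quotes \cite[Theorem~1]{Pa_Bi2_15} and \cite[Theorem~2.9]{Pa_Bi_16} directly, but your descent to $D_1$ also works there, since the $n=1$ centrally finite case is covered by \cite{Pa_Bi_16}.)

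The genuine gap is the final step for $n\ge 2$. You invoke ``the centrally finite case of Conjecture~\ref{c1.2}'' as a black box, but no such result is available in the paper's references; establishing it is precisely the new content of the present theorem in that sub-case. The paper does \emph{not} proceed via your sketched Zariski-density/specialization route either. Instead, once inside $\GL_n(D_1)$ with $D_1$ centrally finite (so $\M_n(D_1)$ is a finite-dimensional $F_1$-algebra), it applies \cite[Lemma~3.1]{her2} (Herstein--Procesi--Schacher on algebraic-valued functions) to upgrade the pointwise radicality of $w$ over $F_1$ to a \emph{uniform} exponent: there is a single $M$ with $w(c_1,\dots,c_m)^M\in F_1$ for all $c_i\in\GL_n(D_1)$. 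Then
\[
u(x_1,\dots,x_m,y)=w(x_1,\dots,x_m)^M\,y\,w(x_1,\dots,x_m)^{-M}y^{-1}
\]
is a genuine generalized group identity of $\GL_n(D_1)$, and since $F_1$ is infinite the Golubchik--Mikhalev theorem \cite{golubchik} forces $n=1$ and $D_1$ commutative, the desired contradiction. The idea you are missing is this passage from GPCGI to GGI via a uniform bound, which is exactly what finite-dimensionality over $F_1$ buys; your Baire/specialization outline is a plausible alternative, but it is neither carried out here nor needed.
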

\begin{proof}
	Assume that $N$ is non-central. By Lemma~\ref{lem:2.3}, ${\rm GL}_n(D)$  satisfies a non-trivial GPCGI, say
	$$w(x_1,x_2,\dots, x_m)=a_1x_{i_1}^{n_1}a_2x_{i_2}^{n_2}\cdots a_tx_{i_t}^{n_t}a_{t+1}.$$ Suppose $n=1$. By \cite[Theorem 1]{Pa_Bi2_15}, $D$ is locally PI, so by applying \cite[Theorem~ 2.9]{Pa_Bi_16}, one has $D^*$ is a field, and we are done. Now, assume $n>1$. Let $S$ be the set of all entries of all $a_i$ and $D_1$ be the division subring defined as in Lemma~\ref{l5.1} with respect to $S$. Then, by Lemma~\ref{l5.1}, $D_1$ is a centrally finite division ring whose center $F_1$ is not a locally finite field. Clearly, $w(x_1,\dots,x_m)$ is a generalized group monomial over $\GL_n(D_1)$. Now, for any elements $c_1,c_2,\dots,c_m\in \GL_n(D_1)$, there exists a positive integer $p$ such that $w(c_1,c_2,\dots,c_m)^p\in F\cap \GL_n(D_1)\subseteq F_1$. Hence, in  view of \cite[Lemma 3.1]{her2}, there exists a positive  integer $M$ such that $w(c_1,c_2,\dots,c_m)^M\in F_1$ for all $c_i\in \GL_n(D_1)$. Therefore, if we put $$u(x_1,x_2,\dots,x_m)=w(x_1,x_2,\dots,x_m)^Myw(x_1,x_2,\dots,x_m)^{-M}y^{-1}$$ where $y$ is an indeterminate, then $u=1$ is a GGI of $\GL_n(D_1)$. Since $F_1$ is not locally finite, $F_1$ is infinite. In view of  \cite[theorems 1, 2]{golubchik}, we have $n=1$ and $D_1$ is commutative, which is a contradiction. Thus, $N$ is central.
\end{proof}	

\section{Invertible elements in Laurent polynomial series algebras}\label{s2.1}	

Let $A$ be a $k$-algebra, where $k$ is a field. Recall that an element $a\in A$ is {\it algebraic} over $k$ if there exists a non-zero polynomial $$f(x)=x^n+a_{n-1}x^{n-1}+\cdots+a_0$$ in $k[x]$ such that $f(a)=0$. The $k$-algebra $A$ is {\it algebraic} over $k$ if every element of $A$ is algebraic over $k$.

(i) Let $R$ be a $k$-algebra and assume that $t$ is an indeterminate which commutes with every element of $R$. We denote by $R((t))$ the set of all Laurent series $\sum\limits_{i=n}^\infty a_it^i$, where $n\in \Z, a_i\in R$. Then, $R((t))$ is an algebra whose addition and multiplication are defined as follows: for any two elements $\alpha=\sum\limits_{i=n}^\infty a_it^i,\beta= \sum\limits_{i=m}^\infty b_it^i\in R((t))$,  clearly, we can assume that $m=n$. Then $$\alpha+\beta=\sum\limits_{i=n}^\infty (a_i+b_i)t^i$$ and $$\alpha\beta=\sum\limits_{i=2n}^\infty c_it^i,$$ where $c_i=\sum\limits_{i=u+v}a_u b_v$. Note that if $a_0$ is invertible in $R$, then so is $\alpha=\sum\limits_{i=0}^\infty a_it^i$ in $R((t))$ \cite[Example 1.7]{lam}.   
Moreover, for every $a\in R$, the element $1+at$ is invertible in $R((t))$ and $(1+at)^{-1}=\sum\limits_{i=0}^\infty (-1)^ia^it^i.$ 
\bigskip

(ii) We may write $(1+at)^{-1}$ in another form if $a$ is algebraic over $k$. Indeed, assume that  $f_a(x)=x^n+c_{n-1}x^{n-1}+\cdots+c_0$ is a non-zero polynomial such that $f_a(a)=0$. Put $a_t =1+ at$. Then, $$0=\Big(\frac{a_t-1}{t}\Big)^n+c_{n-1}\Big(\frac{a_t-1}{t}\Big)^{n-1}+\cdots+c_0.$$ After expanding and multiplying two sides by $t^n$, one has $$a_t^n+g_{n-1}(t)a_t^{n-1}+\cdots+g_0(t)=0,$$ where all $g_i(t)$ are suitable polynomials in the polynomial ring $k[t]$. In particular, $g_0(t)=(-1)^n+(-1)^{n-1}c_{n-1}t+\cdots+(-1)^0c_0t^n=t^nf_a(-t^{-1})$ is non-zero in $k[t]$, and consequently, $$a_t^{-1}=(-a_t^{n-1}-g_{n-1}(t)a_{t}^{n-2}-\cdots - g_1(t) )g_0(t)^{-1}.$$ Since $g_0(t)$ is in $k[t]\subseteq k((t))$,  we can write $$(1+at)^{-1}=\frac{-(1+at)^{n-1}-g_{n-1}(t)(1+at)^{n-2}-\cdots - g_1(t) }{g_0(t)}.$$

If $a\in R$ is an algebraic element over $k$, then this form of $(1+at)^{-1}$ is used parallel with the form presented in (i).  

(iii) The representation of invertible elements in (ii) has the following  application: Let $a\in R$ be an algebraic element over $k$ and $\beta\in k$. Assume that $g_0$ is defined as in (ii). Hence, $$g_0(t)=(1+at) (-(1+at)^{n-1}-g_{n-1}(t)(1+at)^{n-2}-\cdots - g_1(t)),$$ which implies $$g_0(\beta)=(1+a\beta) (-(1+a\beta)^{n-1}-g_{n-1}(\beta)(1+a\beta)^{n-2}-\cdots - g_1(\beta)).$$ As a corollary, if $g_0(\beta)\ne 0$, then $1+a\beta$ is invertible in $R$. Consequently, there are only finitely many $\beta\in k$ such that $1+a\beta$ is non-invertible in $R$.

\section{Almost subnormal subgroups of the general linear group over a division ring with uncountable center}

In this section, we study Conjecture \ref{c1.2} for  division rings whose centers are uncountable. The following result extends \cite[Theorem 2.11]{Pa_Bi_16}.
\begin{prop}
\label{c2.7} Let $D$ be a non-commutative division ring with uncountable center $F$ and assume that $N$ is an almost subnormal subgroup of $D^*$. If $N$ satisfies a non-trivial {\rm GPCGI}, then N is contained in $F$.
\end{prop}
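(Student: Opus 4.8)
The plan is to reduce Proposition~\ref{c2.7} to the division-ring case already handled in the literature, by passing to a suitable finitely generated (hence centrally finite or otherwise controllable) division subring and exploiting the uncountability of $F$. First I would assume $N$ is non-central and derive a contradiction. By Lemma~\ref{lem:2.3}, since $F$ is uncountable (in particular infinite) and $N$ is a non-central almost subnormal subgroup of $D^*=\GL_1(D)$ satisfying a non-trivial GPCGI, the whole group $D^*$ satisfies a non-trivial GPCGI, say
$$w(x_1,x_2,\dots,x_m)=a_1x_{i_1}^{n_1}a_2x_{i_2}^{n_2}\cdots a_tx_{i_t}^{n_t}a_{t+1},$$
with $a_i\in D^*$; by Lemma~\ref{l5.4} we may further arrange $i_1\neq i_t$, and by Lemma~\ref{l5.2} all powers $w^p$ remain genuine generalized group monomials.

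Next I would pin down the radical exponent. For each tuple $(c_1,\dots,c_m)\in (D^*)^m$ there is $p=p(c_1,\dots,c_m)$ with $w(c_1,\dots,c_m)^p\in F$; using the standard uniform-bound argument (the same \cite[Lemma 3.1]{her2} device invoked in the proof of Theorem~\ref{Th2.9}, applied here over $D^*$) I would extract a single positive integer $M$ with $w(c_1,\dots,c_m)^M\in F$ for \emph{all} $c_i\in D^*$. Setting
$$u(x_1,\dots,x_m,y)=w(x_1,\dots,x_m)^M\,y\,w(x_1,\dots,x_m)^{-M}\,y^{-1},$$
we get that $u=1$ is a genuine GGI of $D^*$ (it is non-constant as a generalized monomial because $i_1\neq i_t$ forces $w^M\notin D$, so $u\notin D$). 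The heart of the matter is now to show that a non-commutative division ring with uncountable center cannot satisfy such a nontrivial GGI $u=1$; for this I would invoke the results on generalized group identities in $\GL_n$ (the theorems of \cite{golubchik} and, for the division-ring refinement, \cite{Pa_To_85}, whose applicability over infinite-center division rings is exactly what Lemma~\ref{lem:1.1} and the surrounding discussion guarantee), concluding that $D$ must be commutative — contradicting our standing assumption.

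The step I expect to be the main obstacle is \emph{making the GGI machinery bite over an arbitrary uncountable-center division ring} rather than over a centrally finite one: the cleanest route is to mimic the Theorem~\ref{Th2.9} argument and reduce to a finitely generated division subring $D_1$ containing all the $a_i$, but since $D$ need not be weakly locally finite we cannot assume $D_1$ is centrally finite, so Lemma~\ref{l5.1} is unavailable. Instead I would use the uncountability of $F$ directly, via the Laurent-series constructions of Section~\ref{s2.1}: specialize the indeterminate-substitutions so that the relevant coefficients stay invertible (Section~\ref{s2.1}(iii) shows only finitely many specializations $\beta\in F$ fail, and $F$ is uncountable), build a copy of $D_1((t))$ or of the generalized Laurent polynomial ring inside which $w^M$ is forced to be central, and then transfer a free-subgroup or non-identity witness back to $D^*$ to break the GGI. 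Concretely, the argument parallels \cite[Theorem 2.11]{Pa_Bi_16}: one produces, inside the almost subnormal subgroup, elements whose images in an appropriate localization generate a free group (using that $F$ is uncountable so the ``bad'' parameter set is meagre), and a free pair cannot satisfy $u=1$. Thus $N$ must after all be central, i.e. $N\subseteq F$.
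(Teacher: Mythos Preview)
Your overall strategy---reduce to $D^*$ via Lemma~\ref{lem:2.3}, then derive a contradiction---matches the paper's, but the paper's proof is far shorter because it invokes a result you overlooked: Chiba's theorem \cite[Corollary~3]{chiba_88}, which says directly that a division ring with uncountable center satisfying a non-trivial generalized power central rational identity is centrally finite. Once $D$ is centrally finite it is weakly locally finite, and Theorem~\ref{Th2.9} immediately gives that $D^*$ is abelian, contradicting non-commutativity. That is the entire argument.

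Your proposed route has a genuine gap, and it is not where you think it is. The step where you ``extract a single positive integer $M$'' by invoking \cite[Lemma~3.1]{her2} over $D^*$ is unjustified: in the proof of Theorem~\ref{Th2.9} that lemma is applied only after passing to a \emph{centrally finite} $D_1$, and the uniform-exponent conclusion genuinely relies on finite-dimensionality over the center. Over an arbitrary $D$ with uncountable center there is no a priori reason the exponents $p(c_1,\dots,c_m)$ should be bounded. By contrast, the step you flagged as the ``main obstacle''---getting the GGI machinery of \cite{golubchik,Pa_To_85} to apply---is actually fine for $n=1$ once the center is infinite (compare the use in the proof of Lemma~\ref{l3.2}); so if you \emph{had} a uniform $M$, your GGI $u=1$ would indeed force $D$ commutative. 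The workaround you sketch in the final paragraph (Laurent-series specializations, free-pair witnesses) is essentially an attempt to reprove Chiba's theorem from scratch, and as written it is too vague to constitute a proof; in particular, producing free subgroups to violate $u=1$ is circular here, since existence of free subgroups in this setting is itself the subject of Section~5 and requires additional hypotheses. The clean fix is simply to cite \cite[Corollary~3]{chiba_88} to obtain central finiteness, and then finish with Theorem~\ref{Th2.9}.
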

	
	\begin{proof} Assume that $N$ is not contained in $F$. By Lemma~\ref{lem:2.3}, $D^*$ also satisfies a non-trivial GPCGI. In view of \cite[Corollary 3]{chiba_88}, $D$ is centrally finite. By Theorem~\ref{Th2.9}, $D^*$ is abelian, a contradiction. 
	\end{proof}
	The following proposition may be considered as a generalization of \cite[Corollary~2]{chiba_88} and \cite[Corollary 2.12]{Pa_Bi_16}.
	
	\begin{prop} Let $D$ be a division ring with uncountable center $F$ and $a\in D^*$. Assume that $N$ is a non-central almost subnormal subgroup of $D^*$.  If for every $x\in N$, there exists an integer $n(x)$ depending on $x$ such that 
	${\left( {ax{a^{ - 1}}{x^{ - 1}}} \right)^{n\left( x \right)}} \in F$, then  $a\in F$.
	\end{prop}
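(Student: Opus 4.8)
The plan is to reduce this statement to Proposition~\ref{c2.7} by exhibiting a non-trivial GPCGI that the hypothesis forces on $N$. Fix $a\in D^*$ and suppose, for contradiction, that $a\notin F$. Consider the generalized group monomial $w(x)=axa^{-1}x^{-1}\in D_F[x,x^{-1}]$; since $a\notin F$, this is genuinely a generalized group monomial over $D^*$, and by the hypothesis it is a GPCGI of $N$. The first thing I would check is that $w$ is \emph{non-trivial} in the sense of the paper, i.e.\ that $w^p\notin D$ for every positive integer $p$. By Lemma~\ref{l5.2} this is equivalent to $w$ being non-trivial at all, so one only needs to rule out the degenerate possibility that $axa^{-1}x^{-1}$, or some power of it, collapses to a constant as an element of $D_F[x,x^{-1}]$. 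Because the beginning and end of $w$ involve $x$ and $x^{-1}$ nontrivially and $a$ is a unit not in the center, the reduced form of $w^p$ retains occurrences of $x$, so $w$ is a non-trivial GPCGI of $N$.

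With that in hand, apply Proposition~\ref{c2.7}: $D$ has uncountable center $F$, and $N$ is an almost subnormal subgroup of $D^*$ satisfying a non-trivial GPCGI. If $D$ is non-commutative, Proposition~\ref{c2.7} gives $N\subseteq F$, contradicting the hypothesis that $N$ is non-central. Hence $D$ must be commutative, i.e.\ $D=F$. But then $a\in D=F$, which is exactly the desired conclusion $a\in F$ — contradicting the standing assumption $a\notin F$ only in the sense that it resolves the dichotomy. So in the commutative case the claim is immediate, and in the non-commutative case the non-triviality argument plus Proposition~\ref{c2.7} does it.

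Let me reorganize to avoid the awkward case split: first, if $D$ is commutative then $a\in F$ trivially and there is nothing to prove. So assume $D$ is non-commutative and suppose $a\notin F$. Then $w(x)=axa^{-1}x^{-1}$ is a generalized group monomial over $D^*$ (here $a,a^{-1}\in D^*$ are the required unit coefficients), and the hypothesis says precisely that for every $x\in N$ there is $n(x)$ with $w(x)^{n(x)}\in F$, i.e.\ $w$ is a GPCGI of $N$. Verifying $w$ is non-trivial is the only real content: one shows $w^p$ never lies in $D$, either by the argument in Lemma~\ref{l5.2} together with the observation that the formal expression $axa^{-1}x^{-1}$ in the free-product algebra $D_F[x,x^{-1}]$ cannot equal an element of $D$ when $a\notin F$ (otherwise $a$ would commute with $x$ as indeterminate, forcing $a\in F$). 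Then Proposition~\ref{c2.7} yields $N\subseteq F$, contradicting non-centrality of $N$; hence $a\in F$.

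The main obstacle I anticipate is the non-triviality verification: one has to be careful that $w(x)=axa^{-1}x^{-1}$, viewed as an element of the generalized Laurent polynomial ring $D_F[x,x^{-1}]$, and all of its powers, really do lie outside $D$ — this is where the assumption $a\notin F$ is used, and it must be invoked at the level of formal identities in the free product, not merely for specializations. Once that is settled, everything else is a direct citation of Proposition~\ref{c2.7}. There is also a minor point to confirm that $N$ being ``non-central'' in $D^*$ is the same as $N\not\subseteq F$, which is immediate since $Z(D^*)=F^*$.
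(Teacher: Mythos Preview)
Your proposal is correct and follows essentially the same approach as the paper: assume $a\notin F$ (which forces $D$ to be non-commutative), observe that $w(x)=axa^{-1}x^{-1}$ is then a non-trivial GPCGI of $N$, and invoke Proposition~\ref{c2.7} to reach the contradiction that $N$ is central. Your extended discussion of the non-triviality check and the commutative case is more explicit than the paper's two-line proof, but the argument is substantively identical.
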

	\begin{proof} If $a$ is not in $F$, then $w\left( x \right) = ax{a^{ - 1}}{x^{ - 1}}$ is a non-trivial GPCGI of $N$. By Proposition~\ref{c2.7}, we conclude that $N$ is central, a contradiction.
	\end{proof}
		In the remaining part of this section, we study  the general linear group of degree $n\geq 2$ over a division ring which is algebraic over its uncountable center. 
\begin{lemma}\label{l3.1} Let $R$ be an algebra over a field $k$ and let $f(t)\in R[t]$. If $f(t)$ has infinitely many roots in $k$, then $f(t)\equiv 0$.\end{lemma}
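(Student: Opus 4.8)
The plan is to reduce the claim to the case of polynomials over a field, where it is the familiar statement that a nonzero polynomial of degree $d$ over a field has at most $d$ roots. Write $f(t)=\sum_{i=0}^{d} r_i t^i$ with $r_i\in R$ and suppose for contradiction that $f$ is not identically zero, so some coefficient $r_j\neq 0$. The obstacle is that $R$ need not be a domain, so we cannot directly argue about "the leading coefficient" of $f$; instead I would pass to a suitable quotient or a linear functional that detects the nonzero coefficient $r_j$.

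The cleanest route: since $r_j\neq 0$, pick a $k$-linear map $\varphi\colon R\to k$ with $\varphi(r_j)\neq 0$ — such a map exists because $R$ is a $k$-vector space, so $k r_j$ is a one-dimensional subspace which splits off as a direct summand, and we take the corresponding coordinate projection (composed with an isomorphism $k r_j\cong k$, using $r_j\neq 0$). Apply $\varphi$ coefficientwise to get $g(t)=\sum_{i=0}^d \varphi(r_i)t^i\in k[t]$. For every root $\beta\in k$ of $f$, we have $f(\beta)=\sum_i r_i\beta^i=0$ in $R$, hence $g(\beta)=\varphi\!\left(\sum_i r_i\beta^i\right)=\varphi(0)=0$ by linearity of $\varphi$; note this uses that $\beta^i\in k$, so $\varphi$ commutes with multiplication by these scalars. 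Thus $g\in k[t]$ has infinitely many roots in $k$, and since $k$ is a field $g$ must be the zero polynomial, forcing $\varphi(r_j)=0$, a contradiction.

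Therefore every coefficient $r_i$ vanishes and $f(t)\equiv 0$. I expect the only point requiring a little care is the construction of $\varphi$ and the observation that it respects evaluation at scalars $\beta\in k$ (which it does precisely because $\beta$ and its powers are central scalars, so $\varphi(r_i\beta^i)=\beta^i\varphi(r_i)$); everything else is routine. One could alternatively phrase the argument by extending $k$-basis vectors to coordinate functionals, but the splitting argument above is the shortest. This lemma will presumably be used to run a Vandermonde-type argument over uncountable centers in the subsequent results, where one substitutes uncountably many central scalars into a generalized Laurent expression and concludes that all coefficients vanish.
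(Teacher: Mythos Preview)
Your argument is correct. Writing $f(t)=\sum_{i=0}^d r_i t^i$, choosing a $k$-linear functional $\varphi\colon R\to k$ with $\varphi(r_j)\neq 0$, and observing that $g(t)=\sum_i\varphi(r_i)t^i\in k[t]$ inherits infinitely many roots in $k$ is a clean reduction; the key point, which you handle correctly, is that evaluation at $\beta\in k$ commutes with $\varphi$ because $\varphi$ is $k$-linear and the powers $\beta^i$ are scalars.

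The paper, however, does not argue this way: it simply invokes the standard Vandermonde argument (with a reference to Rowen). Concretely, one picks any $d+1$ distinct roots $\beta_0,\dots,\beta_d\in k$, forms the Vandermonde matrix $V=(\beta_i^{\,j})\in \M_{d+1}(k)$, notes that $V$ is invertible over $k$, and from $V\cdot(r_0,\dots,r_d)^{\mathsf T}=0$ in $R^{d+1}$ concludes $r_i=0$ for all $i$ by multiplying by $V^{-1}$. Your linear-functional route and the Vandermonde route are essentially dual to one another: both exploit only the $k$-vector-space structure of $R$, and neither needs $R$ to be a domain. The Vandermonde version is slightly more direct (no choice of $\varphi$, no appeal to the existence of complements or bases, just finite $k$-linear algebra), while your version has the virtue of reducing literally to the one-variable fact over a field. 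Either is perfectly adequate here, and your closing remark about the intended application to substituting uncountably many central scalars is exactly how the lemma is used downstream.
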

\begin{proof}
	The proof is the standard Vandermonde argument (or see \cite[propositions 2.3.26 and 2.3.27]{Rowen}).
\end{proof}

\begin{lemma}\label{l3.1} Let $k$ be a field, $R$ a ring whose center is $k$, and  $f(t)\in R[t]$. If $f(\alpha) \in k $ for infinitely many elements  $ \alpha \in k$, then $f(t) \in k[t]$.
\end{lemma}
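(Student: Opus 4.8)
The plan is to bootstrap from the previous lemma (Lemma~\ref{l3.1} on roots: if $f \in R[t]$ has infinitely many roots in $k$ then $f \equiv 0$). Write $f(t) = \sum_{j=0}^{d} r_j t^j$ with $r_j \in R$. The hypothesis gives an infinite set $S \subseteq k$ with $f(\alpha) \in k$ for all $\alpha \in S$. The goal is to show each coefficient $r_j$ lies in $k = Z(R)$, which is equivalent to showing $r_j$ commutes with every element of $R$.

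First I would fix an arbitrary $c \in R$ and form the commutator polynomial $g(t) := c f(t) - f(t) c = \sum_{j=0}^{d} (c r_j - r_j c) t^j \in R[t]$. For each $\alpha \in S$, since $f(\alpha) \in k = Z(R)$, we get $g(\alpha) = c f(\alpha) - f(\alpha) c = 0$. Thus $g(t)$ has infinitely many roots in $k$, so by the preceding Lemma~\ref{l3.1} we conclude $g(t) \equiv 0$, i.e. $c r_j = r_j c$ for every $j$. Since $c \in R$ was arbitrary, each $r_j \in Z(R) = k$, and therefore $f(t) \in k[t]$.

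The only subtlety — really the single point where one must be slightly careful — is that Lemma~\ref{l3.1} (the root version) is stated for $f(t) \in R[t]$ with $R$ merely a $k$-algebra, and its proof is a Vandermonde argument over the field $k$; one should check that this argument applies to $g(t)$, whose coefficients $c r_j - r_j c$ are elements of $R$ that need not lie in $k$. This is fine: the Vandermonde matrix has entries in $k$ and is invertible over $k$, so multiplying the vector of coefficients by its inverse (with entries in $k$, acting on the left or right uniformly) forces all coefficients to vanish regardless of where they live in the $R$-module $R$. So no genuine obstacle arises; the proof is essentially a one-line reduction to the already-established root lemma via the commutator trick.
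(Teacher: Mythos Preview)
Your proof is correct and is essentially the same as the paper's: both form the commutator $h(t)=f(t)g(t)-g(t)f(t)$ (you take $g$ constant, the paper allows arbitrary $g(t)\in R[t]$, but constants suffice), observe it vanishes at infinitely many $\alpha\in k$, and invoke the preceding Vandermonde lemma to conclude $h\equiv 0$, hence $f(t)\in Z(R[t])=k[t]$. Your added remark about why the Vandermonde argument applies to coefficients in $R$ is accurate and spells out a point the paper leaves implicit.
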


\begin{proof}For each $g(t) \in R[t]$, set $ h(t)= f(t)g(t)-g(t)f(t) $. By hypothesis, it is clear that $h(\alpha)=0$ for infinitely many $\alpha \in k$. So, in view of the previous lemma, $h(t)\equiv 0$ which implies $f(t) \in k[t]$.
\end{proof}

Now, assume that $D$ is a division ring with infinite center $F$, $A=\M_n(D)$ is the ring of matrices over $D$ of degree $n\ge 1$,  $R=A[y_1,y_2,\dots,y_m]$ is the polynomial ring on non-commuting indeterminates $y_1,\dots,y_m$. It is easy to see that $A$ is commutative if and only if $D=F$ and $n=1$. In the following, we consider only the case when $A$ is non-commutative because otherwise our results should be trivial. 
\begin{lemma}\label{l3.2} 
	Let $w(x_1,x_2,\dots,x_m)=a_1x_{i_1}^{n_1}a_2\cdots a_tx_{i_t}^{n_t}a_{t+1}$ be a generalized group monomial over $A^*=\GL_n(D)$. Then, the element $$w(1+y_1t,1+y_2t,\dots, 1+y_mt)$$ may be written in $R((t))$ in the following form $$w(1+y_1t,1+y_2t, \dots,1+y_mt)=\sum\limits_{i=0}^\infty f_i(y_1,y_2,\dots,y_m)t^i,$$ where $f_i(y_1,y_2,\dots,y_m)$ is a generalized polynomial over $R$ and  homogeneous of degree $i$.   Moreover, $f_0(y_1,y_2,\dots,y_m)=w(1,1,\dots,1)$ and if $w\not\in A^*$, then there exists $i_0\ge 1$ such that $f_{i_0}\not \equiv 0$.
\end{lemma}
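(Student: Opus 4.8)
The plan is to substitute $x_{i_j} = 1 + y_{i_j}t$ directly into the monomial $w$ and expand everything as a Laurent series in $t$ with coefficients in $R = A[y_1,\dots,y_m]$, tracking the lowest-degree terms carefully. The only subtlety is the negative exponents: wherever $n_j < 0$ we must interpret $(1+y_{i_j}t)^{n_j}$ via the geometric-series identity $(1+y_{i_j}t)^{-1} = \sum_{\nu \ge 0} (-1)^\nu y_{i_j}^\nu t^\nu$ recalled in Section~\ref{s2.1}(i), which is legitimate in $R((t))$ since the constant term $1$ is a unit. Thus each factor $(1+y_{i_j}t)^{n_j}$ equals $1 + (\text{terms of positive degree in } t)$, with the degree-$1$ term equal to $n_j y_{i_j}t$ in every case (positive or negative $n_j$). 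Multiplying out the $t$ finitely many such factors together with the constant matrices $a_1,\dots,a_{t+1}$, we obtain a series $\sum_{i \ge 0} f_i t^i$; collecting the monomials of a fixed total $t$-degree $i$ and noting that each such monomial is a product of the $a_\ell$'s interspersed with nonnegative powers of the $y_j$'s whose exponents sum to $i$, we see $f_i$ is a generalized polynomial over $R$ that is homogeneous of degree $i$ in the $y$'s.

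\textbf{The constant term.} Setting $t = 0$ (equivalently, reading off the coefficient of $t^0$) kills all the $y$-dependence and replaces each $x_{i_j}$ by $1$, so $f_0(y_1,\dots,y_m) = w(1,1,\dots,1)$, as claimed. Note this need not be $1$ or even nonzero as an abstract statement, but it is a fixed element of $A$.

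\textbf{Nonvanishing of some higher coefficient.} This is the step I expect to be the main obstacle, and it is where the hypothesis $w \notin A^*$, i.e.\ $w$ is genuinely a generalized group monomial and not a constant, must be used. The key point is that the substitution map $x_{i_j} \mapsto 1 + y_{i_j}t$ should be ``faithful enough'' to detect that $w$ is non-constant. One clean way to argue: if every $f_i$ with $i \ge 1$ were identically zero, then $w(1+y_1t,\dots,1+y_mt) = f_0 = w(1,\dots,1)$ would be a constant element of $A \subseteq R((t))$. Specialize the $y_j$ to generic central indeterminates over the prime field, or better, recall from Section~\ref{s2.1}(iii) (applied within the commutative polynomial extension of $F$ generated by a transcendental $s$) that for all but finitely many scalars $\beta$ the elements $1 + y_j\beta$ are invertible; substituting such $y_j \mapsto$ (free generators of a suitable group algebra) shows that $w$ evaluated on $2m$-tuples of the form $(1+y_1t,\dots)$ inside the generalized Laurent ring $A_F[X,X^{-1}]$ must actually be constant as an element of that ring. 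But in $A_F[X,X^{-1}] = A *_F F[X,X^{-1}]$ the monomial $w = a_1 x_{i_1}^{n_1}\cdots a_t x_{i_t}^{n_t} a_{t+1}$ with all $a_\ell$ invertible and at least one $x$-factor present is, by the normal form for free products, not an element of $A$ — contradicting $w \notin A$. Hence some $f_{i_0}$ with $i_0 \ge 1$ is not identically zero. I would present this last part by directly exhibiting, after collecting terms, that the coefficient of the lowest-degree monomial in the unique longest reduced word forces $f_{i_0} \ne 0$ for $i_0$ at most the number of nonzero exponents $\sum_j |n_j|$; the free-product normal form argument is the conceptual reason this works, and invoking it (together with Lemma~\ref{l5.2} for the later purpose of iterating) keeps the argument short.
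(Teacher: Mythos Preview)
Your expansion into $\sum_i f_i t^i$ and the computation of the constant term $f_0=w(1,\dots,1)$ are fine and coincide with the paper's treatment.

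The gap is in the nonvanishing step. You assume all $f_i\equiv 0$ for $i\ge 1$ and try to conclude, via a ``normal form for free products'' argument, that $w$ itself must lie in $A$, contradicting the hypothesis. But you never establish the link you need: that the substitution $x_j\mapsto 1+y_jt$ induces an \emph{injective} ring map from $A_F[X,X^{-1}]=A*_F F[X,X^{-1}]$ into $R((t))$. Without this, the equality $w(1+y_1t,\dots,1+y_mt)=a$ in $R((t))$ does not force $w=a$ in the free product. Your appeal to Section~\ref{s2.1}(iii) is also off: that paragraph concerns \emph{algebraic} elements, whereas you invoke it for generic transcendentals. The fallback you sketch (``directly exhibit the coefficient of the lowest-degree monomial in the longest reduced word'') is exactly the Magnus-type embedding statement you would need to prove, and it is not an immediate consequence of the free-product normal form because the coefficients $a_\ell$ do not commute with the $y_j$'s; cancellations among the contributions to a given $f_i$ are a priori possible.

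The paper closes this gap in a different and much shorter way. From $w(1+y_1t,\dots,1+y_mt)=a$ it specializes $y_j\mapsto c_j-1$, $t\mapsto 1$ for arbitrary $c_j\in A^*$ to get $w(c_1,\dots,c_m)=a$; thus $a^{-1}w=1$ is a generalized group identity of $\GL_n(D)$. Then it invokes the Golubchik--Mikhalev theorem \cite{golubchik}, which says that a nontrivial GGI over $\GL_n(D)$ (with $F$ infinite) forces $A$ to be commutative, contradicting the standing assumption. So the missing idea is not a normal-form combinatorial argument but the reduction to a GGI and the use of \cite{golubchik}.
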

\begin{proof}
	Since $(1+y_it)^{-1}=\sum\limits_{j=0}^{\infty} (-1)^jy_i^jt^j$, the fact that $$w(1+y_1t,1+y_2t,\dots,1+y_mt)=\sum\limits_{i=0}^\infty f_i(y_1,y_2,\dots,y_m)t^i,$$ where $f_i(y_1,y_2,\dots,y_m)$ is a generalized polynomial over $R$ and  homogeneous of degree $i$, is clear. It is easy to see that $f_0(y_1,y_2,\dots,y_m)=w(1,1,\dots,1)$. Now, assume that $w\not\in A^*$ and all $f_i\equiv 0$ for  $i\ge 1$. Then, $$w(1+y_1t,1+y_2t,\dots, 1+y_mt)=w(1,1,\dots,1).$$ Observe that $a=w(1,1,\dots,1)=a_1a_2\cdots a_{t+1}\in A^*$, so $$a^{-1}w(1+y_1t,1+y_2t,\dots, 1+y_mt)\equiv 1.$$
	For $c_1,c_2,\dots,c_m\in A^*$, one has $$1=a^{-1}w(1+(c_1-1)1,1+(c_2-1)1,\dots, 1+(c_m-1)1).$$ Hence, $a^{-1}w=1$ is a GGI of $A^*$ because $w\not\in A^*$. By \cite[theorems 1, 2]{golubchik}, $A$ is commutative, a contradiction.
\end{proof}

We are now ready to prove the main result of this section.
\begin{theorem}\label{t3.3} Let $D$ be a division ring that is algebraic over its uncountable center $F$ and assume that $N$ is an almost subnormal subgroup of $\GL_n(D)$. If $N$ satisfies a non-trivial {\rm GPCGI}, then $N$ is central.
	 \end{theorem}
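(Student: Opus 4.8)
The plan is to suppose $N$ non-central and derive a contradiction, after reducing to one essential case and then invoking the Laurent-series machinery of Section~\ref{s2.1}. First I would dispose of the easy cases. If $n=1$: when $D$ is commutative $\GL_1(D)=F^{*}$ is abelian so $N$ is central, and when $D$ is non-commutative Proposition~\ref{c2.7} gives $N\subseteq F$; either way this contradicts the choice of $N$. If $n\ge2$ and $D$ is centrally finite, then $D$ is weakly locally finite and, its centre $F$ being uncountable, not a locally finite field, so Theorem~\ref{Th2.9} applied to $N$ gives that $N$ is central --- again a contradiction. Hence from now on $n\ge2$ and $D$ is infinite-dimensional over $F$ (in particular $D$ is non-commutative); by Lemma~\ref{lem:2.3}, $\GL_n(D)$ itself satisfies a non-trivial {\rm GPCGI} $w=a_1x_{i_1}^{n_1}a_2\cdots a_tx_{i_t}^{n_t}a_{t+1}$, and it is classical that $M_n(D)$, being infinite-dimensional over its centre, satisfies no non-trivial generalized polynomial identity.

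By Lemma~\ref{l3.2}, in $R((t))$ with $R=M_n(D)[y_1,\dots,y_m]$ we have $w(1+y_1t,\dots,1+y_mt)=\sum_{j\ge0}f_j(y)t^{j}$ with $f_0=a:=w(1,\dots,1)\in\GL_n(D)$, and since $w\notin M_n(D)$ there is a least $i_0\ge1$ with $f_{i_0}\not\equiv0$. If it happened that $w(1+c_1t,\dots,1+c_mt)=a$ for every tuple of matrices $c_i$ algebraic over $F$, then $a^{-1}w=1$ would hold on $\GL_n(F)^m$, a non-trivial {\rm GGI} of $\GL_n(F)$, and \cite[Theorems~1, 2]{golubchik} (as $F$ is infinite) would force $n=1$, a contradiction; so the algebraic substitutions below detect the non-triviality of $w$. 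For any tuple $c=(c_1,\dots,c_m)$ of $F$-algebraic matrices, the analysis in Section~\ref{s2.1}(ii)--(iii) shows that each $1+c_it$ is a unit of $M_n(D)(t)$ with rational inverse over $F(t)$, and that $1+c_i\beta$ is invertible for all but finitely many $\beta\in F$; hence $\Phi_c(t):=w(1+c_1t,\dots,1+c_mt)\in\GL_n(D(t))$ has expansion $\sum_jf_j(c)t^{j}$ and $\Phi_c(\beta)=w(1+c_1\beta,\dots,1+c_m\beta)\in\GL_n(D)$ for all but finitely many $\beta$. As $w$ is a {\rm GPCGI} of $\GL_n(D)$ each such $\Phi_c(\beta)$ is radical over $F$, and since $F$ is uncountable the pigeonhole principle yields a $p\ge1$ (depending on $c$) with $\Phi_c(\beta)^{p}\in F$ for infinitely many $\beta$. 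Clearing denominators and using the Vandermonde argument (Lemma~\ref{l3.1}), for each $e\in\GL_n(D)$ the rational matrix $\Phi_c(t)^{p}e\Phi_c(t)^{-p}e^{-1}$ equals $I$ at infinitely many $\beta$, hence identically; letting $e$ vary gives $\Phi_c(t)^{p}\in Z\!\left(M_n(D(t))\right)=F(t)\cdot I$.

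Now compare Taylor coefficients in $\Phi_c(t)^{p}\in F((t))\cdot I$, valid for every $F$-algebraic tuple $c$: the constant term gives $a^{p}\in F$, so, letting $q$ be the order of $aF^{*}$ in $\GL_n(D)/F^{*}$ and enlarging $p$ to a multiple of $q$, the lowest non-constant term gives (after a short rearrangement using $a^{q}\in F$) that a fixed generalized polynomial $g(y)$ over $M_n(D)$ --- an average, under conjugation by powers of $a$, of the linearization $f_{i_0}$ --- takes central values on every $F$-algebraic tuple, in particular on $M_n(F)^m$ and on all tuples $(d_1E_{k_1l_1},\dots,d_mE_{k_ml_m})$ with $d_i\in D$. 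Feeding these last substitutions into $g(y)z-zg(y)$ and reading off the matrix entries, the absence of non-trivial {\rm GPI}s for $M_n(D)$ forces $g$ to take central values on all of $M_n(D)^m$, i.e.\ $g(y_1,\dots,y_m)z=zg(y_1,\dots,y_m)$ is a {\rm GGI} of $\GL_n(D)$. Provided it is non-trivial, \cite[Theorems~1, 2]{golubchik} again gives $n=1$ and $D$ commutative, contradicting $n\ge2$; hence $N$ is central.

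The real work lies in the last two steps. The hypothesis that $D$ be \emph{algebraic} over $F$ is used to guarantee both an abundance of invertible elements $1+c_i\beta$ with $c_i$ $F$-algebraic and the rationality in $t$ of $\Phi_c(t)$ (so that $t\mapsto\beta$ is a legitimate specialization), while the assumption that $F$ be \emph{uncountable} is exactly what powers the pigeonhole over the $\beta$. The main obstacle is then twofold: first, showing that the extracted $g$ is not identically zero --- automatic when $a\in F$, but in general the conjugation-average of $f_{i_0}$ could vanish and one must pass to a finer Taylor coefficient --- together with the ``generic matrix'' bootstrap from ``central-valued on $F$-algebraic tuples'' to ``central-valued everywhere'' (which is where the absence of {\rm GPI}s for $M_n(D)$ is essential); and second, the positive-characteristic bookkeeping, where the scalar $p/q$ may vanish modulo $\Char F$, forcing one to choose $p$ with $\Char F\nmid p/q$. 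The reductions, Lemma~\ref{l3.2}, and the routine series manipulations present no difficulty.
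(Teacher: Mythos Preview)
Your overall strategy is the paper's: substitute $1+y_it$ into $w$, expand in $R((t))$, and show that a low-order coefficient must be central, producing a non-trivial generalized polynomial identity on $M_n(D)$ and hence a contradiction. The reductions to $n\ge 2$ and $D$ not centrally finite are fine, as is the use of Lemma~\ref{lem:2.3}. But the two ``main obstacles'' you yourself flag are exactly where the argument is incomplete, and the paper disposes of both by moves you did not make.

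First, \emph{before} expanding, the paper replaces $w$ by $w^q$ so that $a=w(1,\dots,1)\in F$ (this is legitimate since some power of $w(1,\dots,1)$ lies in $F$ by the {\rm GPCGI} hypothesis). With $a$ central, the lowest non-constant coefficient of $\Phi_c(t)^M$ is a non-zero $F$-scalar times $f_{i_0}(c)$ in characteristic $0$, or times $f_{i_0}(c)^{p^\lambda}$ in characteristic $p>0$; there is no ``conjugation-average'', and since $f_{i_0}\not\equiv 0$ by Lemma~\ref{l3.2} the non-triviality issue evaporates. Your worry that the averaged $g$ could vanish, forcing a descent to finer Taylor coefficients, is real for your formulation but entirely avoidable.

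Second, and more seriously, you restrict the specialization argument to $F$-algebraic tuples and then attempt a ``generic matrix'' bootstrap to all of $M_n(D)^m$. The paper short-circuits this by invoking \cite[Theorem~2.10]{Pa_AkAr_01}: since $D$ is algebraic over its uncountable center $F$, the full matrix ring $M_n(D)$ is already algebraic over $F$. Hence \emph{every} tuple $c\in M_n(D)^m$ consists of $F$-algebraic elements, the rationality/specialization machinery of Section~\ref{s2.1}(ii)--(iii) applies to all of them, and one gets $f_{i_0}(c)^\alpha\in F$ for every $c$ directly. This yields a non-trivial {\rm GPI} on $M_n(D)$, and Amitsur's theorem then forces $D$ to be centrally finite, contradicting your case hypothesis. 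Your proposed bootstrap via substitutions $d_iE_{k_il_i}$ is not carried out, and it is not clear that the entrywise identities it produces are non-trivial over $D$.

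One terminological slip: the identity $g(y)z=zg(y)$ you extract is a generalized \emph{polynomial} identity, not a group identity; the relevant structural input is Amitsur's theorem (which you correctly stated earlier), not \cite{golubchik}.
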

\begin{proof} Put $A=\M_n(D)$. For $n=1$, see Proposition~\ref{c2.7}. Now, assume that  $n\ge 2$ and $N$ is non-central. By \cite[Theorem 3.3]{Pa_NgBiHa_2016},  $N$ is normal in $A^*=\GL_n(D)$. We claim that $D$ is centrally finite. Indeed, if $D$ is commutative, then there is nothing to do. Thus, suppose that $D$ is non-commutative. Since $D$ is algebraic over uncountable center $F$, by \cite[Theorem 2.10]{Pa_AkAr_01}, $A$ is algebraic over $F$ . Now, let $w(x_1,x_2,\dots,x_m)=a_1x_{i_1}^{n_1}a_2\cdots a_tx_{i_t}^{n_t}a_{t+1}$ be a non-trivial GPCGI of $A^*$. Then, there exists a positive integer $q$ such that $w{\left( {1 ,\dots,1 } \right)^q} \in~ F$. Replacing $w$ by  $w^q$ if it is necessary, without loss of generality, we can assume that $w(1,1,\dots,1)=a\in F$.

Let  $Y=\{ y_1,y_2,\dots,y_m\}$ be $m$ indeterminates and $R=A[Y,Y^{-1}]$ be the group ring over $A$ of the  free group generated by $Y$. By using the representation in Section~\ref{s2.1} (i), the element $w(1+y_1t,1+y_2t,\dots,1+y_mt)$ in $R((t))$, after expanding, has the form 
$$w(1+y_1t,1+y_2t,\dots,1+y_mt)=\sum\limits_{i=0}^\infty f_i(y_1,y_2,\dots,y_m)t^i,$$ where $f_i(y_1,y_2,\dots,y_m)\in R$ and $f_i$ is homogeneous of degree $i$. By Lemma~\ref{l3.2}, $f_0(y_1,y_2,\dots,y_m)=a$ and there exists $i_0\ge 1$ such that $f_{i_0}\not \equiv 0$. Write $w$ in the following form $$w(1+y_1t,\dots,1+y_mt)=a+f_{i_0}(y_1,\dots,y_m)t^{i_0}+f_{i_0+1}(y_1,\dots,y_m)t^{i_0+1}+\cdots.$$	
We show that $f_{i_0}(y_1,y_2,\dots,y_m)^\alpha y-yf_{i_0}(y_1,y_2,\dots,y_m)^\alpha=0$ is a generalized polynomial identity of $A$, where $y$ is an indeterminate and   $\alpha$ is a positive integer. To do  this, for any elements  $c_1,c_2,\dots,c_m\in A$, it suffices to prove that there exists a positive integer $\alpha$ such that $f_{i_0}(c_1,c_2,\dots,c_m)^\alpha\in F$. 
Indeed, we have $$w(1+c_1t,\dots,1+c_mt)=a+f_{i_0}(c_1,\dots,c_m)t^{i_0}+f_{i_0+1}(c_1,\dots,c_m)t^{i_0+1}+\cdots.$$	 In view of  Section~\ref{s2.1} (iii),  $1+c_i\lambda$ is not invertible in $A$ only for finitely many  $\lambda\in ~F$. Hence, there are uncountably many $\lambda\in F$ such that $1+c_i\lambda$ are invertible in $A$. Consequently, there exists a positive integer $M$ such that  $$w(1+c_1\lambda,1+c_2\lambda,\dots,1+c_m\lambda)^{M}\in F$$ 
for uncountably many $\lambda\in F$. There are two cases to consider.
\bigskip
	
\noindent
\textit{ Case 1. 	$\Char(F)=0$:} 

\bigskip
We have $$u(t)=w(1+c_1t,\dots,1+c_mt)^M=a^M+Ma^{M-1}f_{i_0}(c_1,\dots,c_m)t^{i_0}+\cdots.$$

\noindent
\textit{ Case 2. $\Char(F)=p>0$:}

\bigskip
Suppose that $M=p^\lambda N$, where $a\in \N$ and $(N,p)=1$. Observe that $$(a+f_{i_0}(c_1,c_2,\dots,c_m)i_0+\cdots)^{p^\lambda}=a^{p^\lambda}+f_{i_0}(c_1,c_2,\dots,c_m)^{p^\lambda}t^{i_0p^\lambda}+\cdots,$$ so $$u(t)=w(1+c_1t,\dots,1+c_mt)^M=a^M+Na^{p^\lambda}a^{N-1} f_{i_0}(c_1,\dots,c_m)^{p^\lambda}t^{i_0p^\lambda}+\cdots.$$ 
	
Hence, in both cases, the coefficients of the first two terms of $$u(t)=w(1+c_1t,1+c_2t,\dots,1+c_mt)^M$$ are $a^M$ and $bf_{i_0}(c_1,\dots,c_m)^{\alpha}$ for some $b\in F$ and $\alpha\in \N$. 
On the other hand, since $c_1,\dots,c_m$ are algebraic over $F$, according to Section~\ref{s2.1} (ii), we may assume that $u(t)=\frac{h_1(t)}{h_2(t)}$, where $h_1(t)\in A[t]$ and $h_2(t)\in F[t]$. Therefore, $\frac{h_1(\lambda)}{h_2(\lambda)}\in F$ for infinitely many $\lambda\in F$. Since $h_2(\lambda)$ belongs to $F$, so does $h_1(\lambda)$. 
In view of Lemma~\ref{l3.1}, $h_1(t)\in F[t]$, which implies that $u(t)=\frac{h_1(t)}{h_2(t)} \in F(t)\subseteq F((t))$. Hence, ${f_{{i_0}}}\left( {{c_1},{c_2},\dots,{c_m}} \right)^\alpha\in F$. Thus, we have shown that $A$ satisfies a generalized polynomial identity $$f_{i_0}(y_1,y_2,\dots,y_m)^\alpha y-yf_{i_0}(y_1,y_2,\dots,y_m)^\alpha=0.$$ By Amitsur's Theorem (see \cite[Theorem~9.A, p. 282]{cohn}), $D$ is centrally finite  . The claim is proved.
		
Now, since $F$ is uncountable, $F$ is not algebraic over its prime subfield \cite[Corollary B-2.41, p. 343]{rotman}. Hence, by applying Theorem~\ref{Th2.9},  $N$ is central, a contradiction. The proof is now complete.
\end{proof}

\section{Free subgroups in almost subnormal subgroups in a division ring with uncountable center}

In this section, we focus on the problem of the existence of non-cyclic free subgroups in almost subnormal subgroups of division rings with uncountable centers. The first result on the existence of non-cyclic free subgroups in division rings with uncountable centers was obtained in \cite{rei-von}, where it was proved  that if $D$ is a division ring with uncountable center containing a non-central algebraic over the center element $a$, then $D^*$ contains a non-cyclic free subgroup. Later, Chiba \cite{chiba} proved the same result but  without the assumption of the existence of such an algebraic element. It seems to be impossible to use the techniques in \cite{chiba} to extend this result to all non-central almost subnormal subgroups of $D^*$ (see \cite{Pa_GoSh_09}). Probably, Gon\c calves and Shirvani were the first people who considered the existence of non-cyclic free subgroups in normal subgroups of $\GL_n(D)$, where $D$ is a division ring with uncountable center \cite{Pa_GoSh_09}. Recall that if $n\ge 2$, then a non-central almost subnormal subgroup of $\GL_n(D)$ is normal \cite[Theorem 3.3]{Pa_NgBiHa_2016}, and the problem on the existence of non-cyclic free subgroups in  normal subgroups of $\GL_n(D)$ for $n\ge 2$ was solved in the affirmative  in \cite{Pa_GoSh_09}, so in this section, we consider only the case  $n=1$.  In fact, we will show that in a division ring $D$ with uncountable center $F$, if a non-central almost subnormal subgroup of $D^*$ contains a non-central algebraic element over $F$, then it contains a non-cyclic free subgroup.

\bigskip

To prove the next theorem, we borrow the following results.
\begin{lemma}\label{BD}{\rm \cite[Proposition 2.2]{Pa_BiDu_14}} Let $D$ be a division ring with center $F$. If $a\in D\backslash F$ such that $a^n=1$, then there exists a centrally finite division ring $K$ such that $a$ is non-central in $K$.
\end{lemma}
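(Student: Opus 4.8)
The plan is to exhibit $K$ as an explicit division subring of $D$ that is finite-dimensional over its centre, contains $a$, and in which $a$ is not central.

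\textbf{Reductions.} First I would reduce to the case where $m:=\mathrm{ord}(a)$ is prime to $\Char F$. In characteristic $p$, write $m=p^sm'$ with $p\nmid m'$; then $x^m-1=(x^{m'}-1)^{p^s}$, so $(a^{m'}-1)^{p^s}=0$, and since a division ring has no nonzero nilpotent elements we get $a^{m'}=1$, allowing us to replace $m$ by $m'$. Since $a\notin F=Z(D)$, the inner automorphism $\sigma\colon x\mapsto axa^{-1}$ is an $F$-linear automorphism of $D$ of finite order $d$ with $2\le d\mid m$; in particular $d$ is prime to $\Char F$. Fix $b\in D$ with $\sigma(b)\ne b$.

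\textbf{The main construction.} The heart of the argument is to produce a nonzero $v\in D$ with $\sigma(v)=\zeta v$ for some root of unity $\zeta\ne 1$. Suppose first that $F$ contains a primitive $d$-th root of unity $\omega$. Then $\sigma$ is annihilated by the separable, completely split polynomial $x^d-1=\prod_{j=0}^{d-1}(x-\omega^j)$, so by the usual partial-fraction decomposition $D=\bigoplus_{j=0}^{d-1}D_j$ with $D_j=\{x\in D:\sigma(x)=\omega^jx\}$; since $b\notin D_0=C_D(a)$, some component $v\in D_j$ with $j\ne 0$ is nonzero and $\zeta:=\omega^j$ works. Given such $v$, set $e:=\mathrm{ord}(\zeta)\ge 2$ and $R:=F[a,v]\subseteq D$. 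From $av=\zeta va$ one gets $av^ea^{-1}=\zeta^ev^e=v^e$, so $v^e$ commutes with $a$ and with $v$, hence $v^e\in Z(R)$; moreover, using $a^m=1$ and $va=\zeta^{-1}av$, every element of $R$ is an $F[v^e]$-linear combination of the finitely many monomials $a^iv^j$, $0\le i<m$, $0\le j<e$. Thus $R$ is module-finite over its central subring $F[v^e]$, and being a domain inside $D$ its localisation at $F[v^e]\setminus\{0\}$ is a division ring $K:=F(a,v)\subseteq D$, finite-dimensional over the field $F(v^e)\subseteq Z(K)$, hence centrally finite. Finally $a\in K$, and $av=\zeta va$ with $\zeta\ne 1$ forces $a\notin Z(K)$.

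\textbf{The obstacle.} The genuine difficulty is the case $\zeta_d\notin F$, where $\sigma$ need not be diagonalisable over $F$: instead of a scalar eigenvector one obtains from $b$ a $\sigma$-invariant subspace $W\subseteq D$ of dimension $\ge 2$ on which $\sigma$ acts with irreducible characteristic polynomial dividing $\Phi_e$ for some $e\ge 2$ with $e\mid d$. The plan is to rerun the construction one level up: since $\sigma^e$ fixes $W$ pointwise, $a^e$ centralises $W$ and $a$, so $a^e\in Z(F[a,W])$; one then wants $F[a,W]=F[a,v]$ to be module-finite over a commutative subring built from $a^e$ together with a central element of the subalgebra generated by $W$, after which localisation again produces a centrally finite $K$ containing $a$ non-centrally. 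Making the needed central element explicit --- equivalently, controlling the centre of the algebra generated by a full $\sigma$-orbit of $v$ --- is the technical crux; alternatively one may try to locate a $\sigma$-stable finite field extension $E\supseteq F$ inside $D$ with $\sigma|_E\ne\mathrm{id}$ and take $K:=\bigoplus_{i=0}^{m-1}Ea^i$, which is automatically a finite-dimensional $F$-algebra and a domain, hence a centrally finite division ring with $a$ non-central.
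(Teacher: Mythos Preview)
The paper does not give its own proof of this lemma: it is quoted verbatim from \cite[Proposition~2.2]{Pa_BiDu_14} and used as a black box. So there is no ``paper's proof'' to compare against; the question is simply whether your argument stands on its own.

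Your treatment of the case where $F$ already contains a primitive $d$-th root of unity is correct: the eigenvector $v$ with $av=\zeta va$ produces a quantum-plane relation, $R=F[a,v]$ is finite over the central subring $F[v^e]$, and localising gives a centrally finite division subring $K$ in which $a$ is visibly non-central.

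The problem is that you explicitly stop there. In your ``obstacle'' paragraph you acknowledge that when $\zeta_d\notin F$ the eigenspace decomposition fails, and you offer two sketches without carrying either to a conclusion. Both sketches have real gaps. In the first, passing to an irreducible $\sigma$-invariant subspace $W\subseteq D$ gives no control over products of elements of $W$, so there is no reason $F[a,W]$ is module-finite over any commutative subring; the ``central element'' you need is not produced. In the second, you ask for a $\sigma$-stable finite field extension $E\supseteq F$ inside $D$ on which $\sigma$ acts non-trivially, but nothing so far guarantees such an $E$ exists: $a$ normalises $F(a)$ trivially, and a random $b$ with $ab\ne ba$ need not be algebraic over $F$.

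There is a clean uniform argument that bypasses the case split and that you are close to. The field $F(a)/F$ is a non-trivial cyclotomic (hence abelian Galois) extension, because $a$ is a root of unity of order prime to the characteristic and $a\notin F$. Pick any non-identity $\tau\in\mathrm{Gal}(F(a)/F)$; then $a$ and $\tau(a)$ share the same minimal polynomial over $F$, so by Dickson's conjugacy theorem there exists $c\in D^*$ with $cac^{-1}=\tau(a)$. Conjugation by $c$ therefore restricts to $\tau$ on $F(a)$; if $e=\mathrm{ord}(\tau)$ then $c^e$ centralises $F(a)$. Now $R:=\sum_{i=0}^{e-1}F(a)c^i$ is a subring of $D$ (since $cF(a)=F(a)c$), finite as a module over the commutative subring $F[c^e]\subseteq Z(R)$. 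Localising at $F[c^e]\setminus\{0\}$ yields a domain $K\subseteq D$ finite-dimensional over the field $F(c^e)\subseteq Z(K)$, hence a centrally finite division ring; and $cac^{-1}=\tau(a)\neq a$ forces $a\notin Z(K)$. This is essentially your construction with the roles of $a$ and the auxiliary element reversed: rather than seeking a field on which $a$ acts, use the ready-made field $F(a)$ and find an element acting on it.
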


\begin{lemma}\label{l7.2}{\rm \cite[Corollary 1.4]{Pa_GoSh_09}}  Let $D$ be a division ring with uncountable center $F$ and assume that $u\in D^*$ is an algebraic element over $F$. If $u\notin F$, then there exists $v\in D^*$ of infinite order such that the subgroup $\langle u, v\rangle$ of $D^*$ is isomorphic to the free product $\langle u\rangle*\langle v\rangle$.
\end{lemma}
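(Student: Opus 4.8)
The statement is \cite[Corollary 1.4]{Pa_GoSh_09}; the plan is to reconstruct its proof by a generic‑specialization argument. Since $u\notin F=Z(D)$, the centralizer of $u$ is a proper subset, so I can fix $w\in D^*$ with $wuw^{-1}\ne u$. For $\lambda\in F$ put $v_\lambda:=1+\lambda w$. The goal is to show that the set of $\lambda\in F$ for which $v_\lambda$ is a \emph{good} choice — meaning $v_\lambda\in D^*$, $v_\lambda$ has infinite order, and the canonical surjection $\langle u\rangle*\langle v_\lambda\rangle\twoheadrightarrow\langle u,v_\lambda\rangle$ is an isomorphism — has countable complement in $F$. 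Since $F$ is uncountable, a good $\lambda$ then exists, and $v:=v_\lambda$ is the element we want. (When $u$ has finite order one may instead invoke Lemma~\ref{BD} to transport the problem into a centrally finite division ring and appeal to free‑subgroup results for linear groups, but I keep the uniform argument below.)

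The first two requirements are routine. As $D$ is a division ring, $1+\lambda w=0$ for at most one $\lambda$, so $v_\lambda\in D^*$ for all but one $\lambda$. If $v_\lambda$ has finite order $k\ge 1$ and $v_\lambda\ne 1$, then $v_\lambda$ is a root of $x^{k}-1$ distinct from $1$; using that $\lambda\mapsto v_\lambda$ is injective, that each equation $1+\lambda w=\zeta$ forces $w\in F\cdot(\zeta-1)$, and a degree/Vandermonde bound as in Lemma~\ref{l3.1} together with the finiteness statement in Section~\ref{s2.1}(iii), one gets that only countably many $\lambda$ make $v_\lambda$ of finite order (in characteristic $p$ one uses in addition that $(1+\lambda w)^{p^{s}}=1+\lambda^{p^{s}}w^{p^{s}}$).

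The substantive condition is the third. Fix a nontrivial reduced word $\omega$ in $\langle u\rangle*\langle v\rangle$, say $\omega=u^{a_1}v^{b_1}u^{a_2}v^{b_2}\cdots u^{a_\ell}v^{b_\ell}$ with $b_j\ne 0$ (other shapes being similar). Introduce a central indeterminate $t$ and work in $D((t))$, where $v(t):=1+tw$ is a unit with $v(t)^{-1}=\sum_{i\ge 0}(-tw)^{i}$, so $\omega(u,v(t))\in D[[t]]$. The key claim, following \cite{Pa_GoSh_09}, is that $\omega(u,1+tw)\ne 1$ in $D((t))$ for every nontrivial reduced word $\omega$, i.e. $\langle u\rangle*\langle 1+tw\rangle$ embeds in $D((t))^{*}$. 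Granting this, write $\omega(u,v(t))-1=\sum_{k\ge 0}P_k t^{k}$ with $P_k\in D$ not all zero; since for all but finitely many $\lambda\in F$ the element $\omega(u,v_\lambda)$ is the specialization at $t=\lambda$ of $\omega(u,v(t))$ — legitimate because the only inversion performed is of $v(t)$, which remains invertible, cf. Section~\ref{s2.1}(iii) — a Vandermonde argument exactly as in Lemma~\ref{l3.1} shows $\omega(u,v_\lambda)=1$ for only finitely many $\lambda\in F$. Taking the union over the countably many reduced words completes the proof.

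The main obstacle is the key claim, the ``freeness at the generic point'' $\langle u\rangle*\langle 1+tw\rangle\hookrightarrow D((t))^{*}$. The $t$‑adic valuation alone does not suffice: all of $u^{\pm 1}$ and $(1+tw)^{\pm 1}$ have valuation $0$, and reduction modulo $t$ sends $\omega(u,1+tw)$ to $u^{a_1+\cdots+a_\ell}$, which only disposes of words whose total $u$‑exponent is nonzero modulo $\mathrm{ord}(u)$. For the remaining words one must examine higher coefficients: the coefficient of $t$ in $\omega(u,1+tw)-1$ equals $\sum_{j}b_j\,u^{c_j}wu^{-c_j}$ with $c_j=a_1+\cdots+a_j$, which registers the inequality $wuw^{-1}\ne u$ but can still vanish for some reduced words (for instance $u^{-1}vuvu^{-1}v^{-1}uv^{-1}$). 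One therefore needs a genuinely inductive ping‑pong on a refined filtration of $D((t))$, or the Malcev–Neumann ordered‑series picture, and it is here that the non‑centrality of $u$ (through $wuw^{-1}\ne u$) and, ultimately, its algebraicity over $F$ enter essentially; I would treat this step as the technical core, following \cite{Pa_GoSh_09} closely.
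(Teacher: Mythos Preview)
The paper does not prove Lemma~\ref{l7.2}: it is quoted verbatim from \cite[Corollary~1.4]{Pa_GoSh_09} and used as a black box in the proof of Theorem~\ref{t7.2}. There is thus no in-paper argument to compare your sketch against; what you wrote is already more than the paper supplies.

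As a reconstruction, however, your sketch has a genuine gap, and you identify it yourself: the ``generic freeness'' claim that $\langle u\rangle * \langle 1+tw\rangle$ embeds in $D((t))^{*}$ is the entire content of the lemma, and you do not prove it. Your analysis of the constant and linear coefficients is correct, and your observation that the $t$-adic filtration is too coarse is exactly right; but pointing to ``an inductive ping-pong on a refined filtration'' or ``the Malcev--Neumann picture'' and then deferring to \cite{Pa_GoSh_09} is not a proof. This is precisely where the algebraicity of $u$ over $F$ must do real work (it confines $u$ to a finite-dimensional $F$-subalgebra and makes valuation or linear-algebra arguments available), and nothing in your outline explains how.

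There are also smaller issues. You write $\omega(u,v(t))-1=\sum_{k\ge 0}P_k t^{k}$ as a power series and then ``specialize at $t=\lambda$'' and invoke Lemma~\ref{l3.1}; but that lemma is for polynomials, and a formal power series cannot be evaluated at $\lambda\ne 0$. The fix is to clear denominators: $D[t]$ is an Ore domain, so $\omega(u,1+tw)-1=q(t)^{-1}p(t)$ in its Ore quotient with $p,q\in D[t]$, and for central $\lambda$ with $q(\lambda)\ne 0$ one has $\omega(u,v_\lambda)=1$ iff $p(\lambda)=0$, to which Vandermonde applies. Your citation of Section~\ref{s2.1}(iii) is misplaced here, since that item assumes the element (your $w$) is algebraic over $F$, which is not hypothesized. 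Finally, your finite-order argument is muddled: the line ``$1+\lambda w=\zeta$ forces $w\in F\cdot(\zeta-1)$'' is a non sequitur because $\zeta$ need not lie in $F$; the clean argument is that for each $k$ the polynomial $(1+tw)^{k}-1\in D[t]$ has leading term $w^{k}t^{k}\ne 0$ and hence at most $k$ roots in $F$.
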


\begin{theorem}\label{t7.2}
	Let $D$ be a division ring with uncountable center $F$ and assume that $G$ is a non-central almost subnormal subgroup of $D^*$. If $G\backslash F$ contains an algebraic element over $F$, then $G$ contains a non-cyclic free subgroup.
\end{theorem}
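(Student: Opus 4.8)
The plan is to reduce to the classical result on free subgroups in $D^*$ (Lemma~\ref{l7.2}) by producing, inside $G$, an algebraic non-central element of a suitable kind, and then to split into cases according to whether the given algebraic element $a\in G\setminus F$ is of finite or infinite multiplicative order. First I would dispose of the trivial structure: if $n=1$ there is no matrix reduction to worry about, so we work directly in $D^*$ with the almost normal series $G=G_r\le\cdots\le G_1=D^*$.

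\smallskip

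\textbf{Case $a$ has infinite order.} Here $a$ is already a non-central algebraic element of $D^*$ lying in $G$, so Lemma~\ref{l7.2} gives $v\in D^*$ of infinite order with $\langle a,v\rangle\cong\langle a\rangle * \langle v\rangle$ free of rank... well, it is a free product of two infinite cyclic groups, hence contains a non-cyclic free subgroup (the commutator $[a,v]$ and $[a^2,v]$, say, generate a free group of rank $2$ by the Nielsen--Schreier / ping-pong bookkeeping, since a free product of two infinite cyclic groups is virtually free non-abelian). The one thing to check is that this free subgroup can be pushed inside $G$: I would use the conjugation trick already deployed in Lemma~\ref{l7.1}, namely replace $v$ by a commutator-type element $c_r(a,v)\in G_r=G$ built from the series, or more simply argue that $G$ contains $[a, g]$ for all $g\in D^*$ when $G$ is normal and, for the almost-subnormal steps, pass to a suitable power; the content of Lemma~\ref{l7.1}(3) is exactly that such iterated commutator monomials land in $G$. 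So the free pair can be manufactured from monomials $u_r(a,v_1), u_r(a,v_2)$ for cleverly chosen $v_1,v_2$, and freeness is preserved because a specialization of a free-group-valued monomial whose first and last letters do not cancel is again free — this is the same mechanism as in the proof of Lemma~\ref{lem:2.3}.

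\smallskip

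\textbf{Case $a$ has finite order.} Then $a^k=1$ for some $k$ but $a\notin F$. By Lemma~\ref{BD} there is a centrally finite division ring $K$ in which $a$ is still non-central; the point of invoking $K$ is that inside a centrally finite division ring one has more leverage — in particular $K$ is finite-dimensional over $Z(K)$, and one can find (e.g.\ via the structure of $K$ or a Skolem--Noether argument) an element $b\in K^*$ of infinite order that together with $a$ generates a non-abelian subgroup, indeed one can often arrange $\langle a,b\rangle$ to contain a free group directly, or reduce again to Lemma~\ref{l7.2} applied inside $K$. Once a non-cyclic free subgroup is located inside $\langle a, b\rangle\le D^*$, the same "push into $G$" device as above (conjugate commutators / take powers through the almost normal series, cf.\ Lemma~\ref{l7.1}) produces one inside $G$.

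\smallskip

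\textbf{Main obstacle.} The delicate point is not the existence of a free subgroup in $D^*$ — that is handed to us by Chiba's theorem and Lemmas~\ref{BD} and~\ref{l7.2} — but getting it \emph{inside the almost subnormal subgroup $G$}. The normal case is fine, but an almost normal series can mix normal steps with finite-index steps, and under a finite-index step one only controls a bounded power of each element. So the real work is to show that from a free pair $\{x,y\}\subset D^*$ one can build monomials in $x,y$ (and in a fixed non-central algebraic element $a\in G$) that (a) provably lie in $G$ — this is where Lemma~\ref{l7.1}(3) is the template — and (b) still generate a free group, i.e.\ the specialization does not collapse. For (b) I expect to need that the relevant monomial, viewed in the free group, has a reduced form whose leading and trailing syllables survive under specialization, exactly as in Lemma~\ref{lem:2.3}; turning "does not collapse" into a genuine freeness statement may require a ping-pong argument on the action of $D^*$ on a suitable $D$-vector space or on $\mathbb P^1$ over a complete field, in the spirit of \cite{Pa_GoSh_09}, rather than pure combinatorial group theory.
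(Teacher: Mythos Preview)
Your case split (finite vs.\ infinite order of the algebraic element $a\in G\setminus F$) matches the paper's, and in the infinite-order case you are essentially on the paper's track, but you are making it harder than it is. Since both $\langle a\rangle$ and $\langle v\rangle$ are infinite cyclic, the free product $\langle a\rangle * \langle v\rangle$ is \emph{literally} the free group $F_2$; there is no ``virtually free'' step and no ping-pong needed. The paper simply takes $x=c_r(a,v)$ and $y=c_r(a,v^2)$: both lie in $G$ by Lemma~\ref{l7.1}(3), both lie in $\langle a,v\rangle=F_2$, and a glance at their reduced forms (the $v$-exponents in $y$ are $\pm 2$ while those in $x$ are $\pm 1$) shows $xy\neq yx$. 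Nielsen--Schreier then finishes: a non-abelian subgroup of a free group is non-cyclic free. Your worry that ``turning `does not collapse' into a genuine freeness statement may require a ping-pong argument'' disappears once you notice that the ambient group is already free.

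In the finite-order case, however, your plan has a real gap and the paper does something different. Your two proposed routes both break: (i) applying Lemma~\ref{l7.2} \emph{inside} $K$ requires $Z(K)$ to be uncountable, which Lemma~\ref{BD} does not guarantee; (ii) the ``push into $G$'' device $c_r(a,\,\cdot\,)$ with $a$ of finite order lands you in $\langle a,b\rangle\cong \mathbb{Z}/k\mathbb{Z} * \mathbb{Z}$ at best, which is not free, so Nielsen--Schreier no longer hands you freeness of $\langle c_r(a,b_1),c_r(a,b_2)\rangle$ for free, and the reduced-word bookkeeping genuinely could collapse. The paper bypasses all of this: it sets $H=G\cap K^*$, notes that $H$ is almost subnormal in $K^*$ (a general stability fact, \cite[Lemma~1.1]{Pa_NgBiHa_2016}) and non-central since $a\in H$, and then invokes the already-known theorem that every non-central almost subnormal subgroup of the unit group of a \emph{centrally finite} division ring contains a non-cyclic free subgroup \cite[Theorem~4.2]{Pa_NgBiHa_2016}. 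So the finite-order case is handled by intersection-and-citation, not by manufacturing a free pair and pushing it down.
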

\begin{proof}  Assume that $u\in G\backslash F$ and $u$ is algebraic over $F$. We consider two cases:
	\bigskip
	
	\noindent
	\textit{Case 1. The order of $u$ is finite:} 
	
	\bigskip
	
	Then, by Lemma~\ref{BD}, there exists a centrally finite division ring $K$ of $D$ such that $u$ is a non-central element in $K$. Put $H=G\cap K^*$. Then, $H$ is almost subnormal in $K^*$ (\cite[Lemma 1.1]{Pa_NgBiHa_2016}) and $H$ is non-central since $u\in H$. Hence, $H$ contains a non-cyclic free subgroup by \cite[Theorem 4.2]{Pa_NgBiHa_2016}, so does $G$. 
	
	\bigskip
	
	\noindent
	\textit{ Case 2. The order of $u$ is infinite:}
	
	\bigskip
		
	By Lemma~\ref{l7.2}, there exists an element $v\in D^*$ of infinite order such that the subgroup $\langle u, v\rangle$ of $D^*$ is isomorphic to the free product $\langle u\rangle*\langle v\rangle$. Being the free product of two infinite cyclic groups, the group $\langle u\rangle*\langle v\rangle$ is free, so is $\langle u, v\rangle$. Let 
	$$G=G_r\leq G_{r-1}\leq \cdots\leq G_1=D^*$$
	be an almost normal series  in $D^*$. In view of Lemma~\ref{l7.1}, the elements $x=c_r(u,v)$ and $y=c_r(u,v^2)$ both belong to the group $G$. Now, let us recall that $$x=c_r(u,v)=uv^{m_1} u^{n_2}v^{m_2}\cdots u^{n_t}v^{m_t}, \mbox{ where } n_j,m_j \in \{1,-1\},$$ and 
	$$y=c_r(u,v^2)=uv^{m'_1} u^{n'_2}v^{m'_2}\cdots u^{n'_t}v^{m'_t}, \mbox{ where } n'_i\in \{1,-1\},  m'_i=2m_i,$$
	so  $xy\ne yx$. Being a non-abelian subgroup of the free group $\langle u, v\rangle$, in  view of the Nielsen-Schreier Theorem (see \cite[p. 103]{Pa_St_78}), the subgroup $\langle x,y\rangle$ is also a non-abelian free group.
	
	We see that in both cases, the group $G$ contains a non-cyclic free subgroup. Hence, the proof of the theorem is now complete.
	\end{proof}
	
	\noindent
	
	\textbf{Acknowledgements}

	The authors would like to express their sincere gratitude to the Editor and also to the referee for his/her careful reading, very useful comments and suggestions.

\end{document}